\documentclass{amsart}

\newtheorem{theorem}{Theorem}[section]
\newtheorem{corollary}[theorem]{Corollary}
\newtheorem{lemma}[theorem]{Lemma}

\theoremstyle{definition}
\newtheorem{definition}[theorem]{Definition}

\theoremstyle{remark}
\newtheorem{remark}[theorem]{Remark}

\numberwithin{equation}{section}

\newcommand{\Real}{\mathbb R}

\newcommand{\ip}[1]{\left\langle#1\right\rangle}
\newcommand{\set}[1]{\left\{#1\right\}}
\newcommand{\norm}[1]{\left\Vert#1\right\Vert}
\renewcommand{\vec}[1]{\boldsymbol{#1}}
\newcommand{\abs}[1]{\left\vert#1\right\vert}
\DeclareMathOperator{\esssup}{ess\,sup}

\begin{document}

\title[Gaussian estimates]{Gaussian estimates for fundamental solutions of
second order parabolic systems with time-independent coefficients}

\author{Seick Kim}
\address{Mathematics Department, University of Missouri, Columbia, Missouri 65211}
\email{seick@math.missouri.edu}

\subjclass[2000]{Primary 35A08, 35B45; Secondary 35K40}



\keywords{Gaussian estimates, a priori estimates, parabolic system}

\begin{abstract}
Auscher, McIntosh and Tchamitchian studied the heat kernels of second
order elliptic operators in divergence form with complex bounded measurable
coefficients on $\Real^n$.
In particular, in the case when $n=2$
they obtained Gaussian upper bound estimates
for the heat kernel without imposing further assumption on the coefficients.
We study the fundamental solutions of the systems of second order
parabolic equations in the divergence form with bounded, measurable,
time-independent coefficients, and
extend their results to the systems of parabolic equations.
\end{abstract}

\maketitle

\section{Introduction}
\label{sec:I}
In 1967, Aronson \cite{Aronson} proved Gaussian upper and lower bounds
for the fundamental solutions of parabolic equations in divergence form
with bounded measurable coefficients.
To establish the Gaussian lower bound Aronson made use of the Harnack
inequality for nonnegative solutions which was proved by Moser in 1964
(see \cite{Moser}).
Related to Moser's parabolic Harnack inequality, we should mention
Nash's earlier paper \cite{Nash} where the H\"{o}lder
continuity of weak solutions to parabolic equations in divergence form was
established.
In 1985, Fabes and Stroock \cite{FS} showed that the idea of Nash
could be used to establish a Gaussian upper and lower bound on the fundamental
solution.
They showed that actually
such Gaussian estimates could be used to prove Moser's Harnack inequality.
We note that Aronson also obtained Gaussian upper bound estimates of 
the fundamental solution without using Moser's Harnack inequality.

In \cite{Auscher}, Auscher proposed a new proof of Aronson's Gaussian
upper bound estimates for the fundamental solution of second order
parabolic equations with time-independent coefficients.
His method relies crucially on the assumption that the coefficients are
time-independent and thus it does not exactly reproduce Aronson's result,
which is valid even for the time-dependent coefficients case.
However, his method is interesting in the sense that it
carries over to equations with complex coefficients provided that the
complex coefficients are a small perturbation of real coefficients.
Along with this direction,
Auscher, McIntosh and Tchamitchian also showed that
the heat kernel of second order elliptic operators in divergence form
with complex bounded measurable coefficients in the two dimensional space
has a Gaussian upper bound
(see \cite{AMT} and also \cite{AT}).

We would like to point out that
a parabolic equation with complex coefficients is, in fact,
a special case of a system of parabolic equations.
From this point of view,
Hofmann and the author showed that the fundamental solution
of a parabolic system
has an upper Gaussian bound if the system is
a small perturbation of a diagonal system, which, in particular,
generalized the result of Auscher mentioned above to
the time-dependent coefficients case (see \cite{HK}).
However, the above mentioned result of Auscher, McIntosh and Tchamitchian
regarding the heat kernel of two dimensional elliptic
operators with complex coefficients
does not follow directly from our result.

One of the main goals of this article is to provide a proof that
weak solutions of the parabolic system of divergence type
with time-independent coefficients associated to an
elliptic system in two dimensions enjoy the parabolic local boundedness
property and to show that its fundamental solution has a
Gaussian upper bound. More generally, we show that if weak solutions
of an elliptic system satisfy H\"{o}lder estimates at every scale,
then weak solutions of the corresponding parabolic system with time-independent
coefficients also satisfies similar parabolic H\"{o}lder estimates
from which, in particular,
the parabolic local boundedness property follows easily.
Also, such an argument allows one to derive 
H\"{o}lder continuity estimates for weak solutions of parabolic equations
with time-independent coefficients directly from
De Giorgi's theorem \cite{DG57}
on elliptic equations, bypassing Moser's parabolic Harnack inequality.
In fact, this is what Auscher really proved in the
setting of complex coefficients equations by using a functional calculus method
(see \cite{Auscher} and also \cite{AQ}, \cite{AT}).
Even in those complex coefficients settings,
we believe that our approach is much more straightforward and thus
appeals to wider readership.

Finally, we would like to point out that
in this article,
we are mainly interested in global estimates and that
we do not attempt to treat, for example,
the systems with lower order terms, etc.
However, let us also mention that, with some extra technical details,
our methods carry over to those cases as well as to the systems of higher order;
see e.g. \cite{AQ}, \cite{AT} for the details, and also Remark~\ref{rmk:local}.

The remaining sections are organized in the following way.
In Section~\ref{sec:N} we give notations, definitions, and some known facts.
We state the main results in Section~\ref{sec:M}
and give the proofs in Section~\ref{sec:P}.

\section{Notation and definitions}
\label{sec:N}
\subsection{Geometric notation}
\begin{enumerate}
\item
$\Real^n=\text{$n$-dimensional real Euclidean space.}$
\item
$x=(x_1,\cdots,x_n)$ is an arbitrary point of $\Real^{n}$.
\item
$X=(x,t)$ denotes an arbitrary point in $\Real^{n+1}$,
where $x\in\Real^n$ and $t\in\Real$.
\item
$B_r(x)=\set{y\in\Real^n:\abs{y-x}<r}$ is an open ball in $\Real^n$ with center
$x$ and radius $r>0$.
We sometimes drop the reference point $x$ and 
write $B_r$ for $B_r(x)$ if there is no danger of confusion.
\item
$Q_r(X)=\set{(y,s)\in\Real^{n+1}: \abs{y-x}<r\text{ and } t-r^2<s<t}$.
We sometimes drop the reference point $X$ and write $Q_r$ for $Q_r(X)$.
\item
$Q^{*}_r(X)=\set{(y,s)\in\Real^{n+1}: \abs{y-x}<r\text{ and } t<s<t+r^2}$.
\item
$Q_{r,s}(X)=\set{(y,s)\in Q_r(X)}$; i.e.,
$Q_{r,s}(X)=B_r(x)\times\set{s}$ if $s\in(t-r^2,t)$ and
$Q_{r,s}(X)=\emptyset$ otherwise.
We sometimes drop the reference point $X$ and write $Q_{r,s}$ for $Q_{r,s}(X)$.
\item
For a cylinder $Q=\Omega\times (a,b)\subset \Real^{n+1}$,
$\partial_P Q$ denotes its parabolic boundary, namely,
$\partial_P Q=\partial\Omega\times (a,b)\cup \overline{\Omega}\times\set{a}$,
where $\partial\Omega$ is the usual topological boundary of
$\Omega\subset\Real^n$ and $\overline\Omega$ is its closure.
\end{enumerate}

\subsection{Notation for functions and their derivatives}
\begin{enumerate}
\item For a mapping from  $\Omega\subset\Real^n$ to $\Real^N$,
we write $\vec{f}(x)=(f^1(x),\ldots,f^N(x))^T$ as a column vector.
\item
$\overline{f}_{Q}=\frac{1}{\abs{Q}}\int_{Q}f$,
where $\abs{Q}$ denotes the volume of $Q$.
\item
$u_t=\partial u/\partial t$.
\item
$D_{x_i} u= D_i u= u_{x_i}=\partial u/\partial x_i$.
\item
$D u=(u_{x_1},\ldots,u_{x_n})^T$ is the spatial gradient of $u=u(x,t)$.
\item
For $\vec{f}=(f^1,\ldots,f^N)^T$,
$D\vec{f}=(Df^1,\ldots,Df^N)$; that is $D\vec{f}$ is the $n\times N$ matrix
whose $i$-th column is $Df^i$.
\end{enumerate}

\subsection{Function spaces}
\begin{enumerate}
\item
For $\Omega\subset\Real^n$ and $p\ge 1$, $L^p(\Omega)$ denotes the space
of functions with the following norms:
\begin{equation*}
\norm{u}_{L^p(\Omega)}=\left(\int_\Omega\abs{u(x)}^p\,dx\right)^{1/p}\quad\text{and}\quad
\norm{u}_{L^\infty(\Omega)}=\esssup_\Omega\abs{u}.
\end{equation*}
\item
$C^{\mu}(\Omega)$ denotes the space of
functions that are H\"{o}lder continuous with the exponent $\mu\in (0,1]$,
and
\begin{equation*}
[u]_{C^{\mu}(\Omega)}
=\sup_{x\neq x'\in \Omega}\frac{\abs{u(x)-u(x')}}{\abs{x-x'}^\mu}<\infty.
\end{equation*}
\item
The Morrey space $M^{2,\mu}(\Omega)$ is the set of all functions
$u\in L^2(\Omega)$ such that
\begin{equation*}
\norm{u}_{M^{2,\mu}(\Omega)}=\sup_{B_\rho(x)\subset \Omega}
\left(\rho^{-\mu}\int_{B_\rho(x)}\abs{u}^2\right)^{1/2}<\infty.
\end{equation*}
\item
$C^{\mu}_{P}(Q)$ denotes the space of
functions defined on $Q\subset\Real^{n+1}$ such that
\begin{equation*}
[u]_{C^{\mu}_{P}(Q)}
=\sup_{X\neq X'\in Q}\frac{\abs{u(X)-u(X')}}{d_P(X,X')^\mu}<\infty,
\end{equation*}
where $d_P(X,X')=\max\left(\abs{x-x'},\sqrt{\abs{t-t'}}\right)$.
\end{enumerate}

\subsection{Elliptic and parabolic systems and their adjoints}
\begin{definition}
We say that the coefficients $A^{\alpha\beta}_{ij}(x)$
satisfy the uniform ellipticity condition
if there exist numbers $\nu_0, M_0>0$ such that for all $x\in\Real^n$ we have
\begin{equation}
\label{eqn:para}
\ip{\vec{A}^{\alpha\beta}(x)\vec{\xi}_\beta,\vec{\xi}_\alpha}\ge
\nu_0\abs{\vec{\xi}}^2\quad\text{and }
\abs{\ip{\vec{A}^{\alpha\beta}(x)\vec{\xi}_\beta,\vec{\eta}_\alpha}}\le
M_0\abs{\vec{\xi}}\abs{\vec{\eta}},
\end{equation}
where we used the following notation.
\begin{enumerate}
\item
For $\alpha,\beta=1,\ldots,n$,
$\vec{A}^{\alpha\beta}(x)$ are
$N\times N$ matrices with $(i,j)$-entries $A^{\alpha\beta}_{ij}(x)$.
\item
$\vec{\xi}_\alpha=(\xi_\alpha^1,\cdots,\xi_\alpha^N)^T$
and $\abs{\vec{\xi}}^2=\sum\limits_{\alpha=1}^n\sum\limits_{i=1}^N\abs{\xi_\alpha^i}^2$.
\item
$\ip{\vec{A}^{\alpha\beta}(x)\vec{\xi}_\beta,\vec{\eta}_\alpha}
=\sum\limits_{\alpha,\beta=1}^n \sum\limits_{i,j=1}^N
A_{ij}^{\alpha\beta}(x)\xi_\beta^j \eta_\alpha^i$.
\end{enumerate}
We emphasize that we do not assume that the coefficients are symmetric.
\end{definition}

\begin{definition}
We say that a system of $N$ equations on $\Real^n$
\begin{equation*}
\sum_{j=1}^N\sum_{\alpha,\beta=1}^n
D_{x_\alpha}(A^{\alpha\beta}_{ij}(x) D_{x_\beta}u^j)=0\qquad
(i=1,\ldots,N)
\end{equation*}
is elliptic if the coefficients satisfy
the uniform ellipticity condition.
We often write the above system in a vector form
\begin{equation}
\label{eqn:E-01}
L\vec{u}:=\sum_{\alpha,\beta=1}^n D_\alpha(\vec{A}^{\alpha\beta}(x)
D_\beta\vec{u})=0,
\quad\vec{u}=(u^1\ldots,u^N)^T.
\end{equation}
The adjoint system of \eqref{eqn:E-01} is given by
\begin{equation}
\label{eqn:E-02}
L^{*}\vec{u}:=\sum_{\alpha,\beta=1}^n
D_\alpha\left((\vec{A}^{\alpha\beta}){}^{*}(x) D_\beta\vec{u}\right)=0,
\end{equation}
where $(\vec{A}^{\alpha\beta}){}^{*}=(\vec{A}^{\beta\alpha})^T$,
the transpose of $\vec{A}^{\beta\alpha}$.
\end{definition}

\begin{definition}
We say that a system of $N$ equations on $\Real^{n+1}$
\begin{equation*}
u^i_t-\sum_{j=1}^N\sum_{\alpha,\beta=1}^n
D_{x_\alpha}(A^{\alpha\beta}_{ij}(x) D_{x_\beta}u^j)=0\qquad
(i=1,\ldots,N)
\end{equation*}
is parabolic if the (time-independent) coefficients satisfy
the uniform ellipticity condition.
We often write the above system in a vector form
\begin{equation}
\label{eqn:P-01}
\vec{u}_t-L\vec{u}
:=\vec{u}_t-\sum_{\alpha,\beta=1}^n D_\alpha(\vec{A}^{\alpha\beta}(x)
D_\beta\vec{u})=0.
\end{equation}
The adjoint system of \eqref{eqn:P-01} is given by
\begin{equation}
\label{eqn:P-02}
\vec{u}_t+L^{*}\vec{u}
:=\vec{u}_t+\sum_{\alpha,\beta=1}^n
D_\alpha\left((\vec{A}^{\alpha\beta}){}^{*}(x) D_\beta\vec{u}\right)=0,
\end{equation}
where $(\vec{A}^{\alpha\beta}){}^{*}=(\vec{A}^{\beta\alpha})^T$,
the transpose of $\vec{A}^{\beta\alpha}$.
\end{definition}

\subsection{Weak solutions}
In this article, the term ``weak solution'' is used in a rather abusive way.
To avoid unnecessary technicalities,
we may assume that all the coefficients involved are smooth so that
all weak solutions are indeed classical solutions.
However, this extra smoothness assumption will not
be used quantitatively in our estimates.
This is why we shall make clear the dependence of constants.
\begin{enumerate}
\item
We say that $\vec{u}$ is a weak solution of \eqref{eqn:E-01} in $\Omega\subset\Real^n$ if $\vec{u}$ is a (classical) solution of \eqref{eqn:E-01} in $\Omega$ and $\vec{u}, D\vec{u}\in L^2(\Omega)$.
\item
We say that $\vec{u}$ is a weak solution of \eqref{eqn:P-01} in a cylinder
$Q=\Omega\times (a,b)\subset\Real^{n+1}$ if $\vec{u}$
is a (classical) solution of \eqref{eqn:E-01} in $Q$ and
$\vec{u}, D\vec{u}\in L^2(Q)$, $\vec{u}(\cdot,t)\in L^2(\Omega)$ for all
$a\le t\le b$, and $\sup_{a\le t\le b}
\norm{\vec{u}(\cdot,t)}_{L^2(\Omega)}<\infty$.
\end{enumerate}
\subsection{Fundamental solution}
By a fundamental solution (or fundamental matrix) $\vec{\Gamma}(x,t;y)$
of the parabolic system \eqref{eqn:P-01}
we mean an $N\times N$ matrix of functions defined for $t>0$ which,
as a function of $(x,t)$, is a solution of \eqref{eqn:P-01}
(i.e., each column is a solution of \eqref{eqn:P-01}),
and is such that
\begin{eqnarray}
\lim_{t\downarrow 0}\int_{\Real^n}\vec{\Gamma}(x,t;y)\vec{f}(y)\,dy
=\vec{f}(x)
\end{eqnarray}
for any bounded continuous function $\vec{f}=(f^1,\ldots,f^N)^T$,
where $\vec{\Gamma}(x,t;y)\vec{f}(y)$ denotes the usual matrix multiplication.

\subsection{Notation for estimates}
We employ the letter $C$ to denote a universal constant usually depending
on the dimension and ellipticity constants.
It should be understood that $C$ may vary from line to line.
We sometimes write $C=C(\alpha,\beta,\ldots)$ to emphasize the 
dependence on the prescribed quantities $\alpha,\beta,\ldots$.

\subsection{Some preliminary results and known facts}
\begin{lemma}[Energy estimates]
\label{lem:P-03}
Let $\vec{u}$ be a weak solution of \eqref{eqn:P-01} in $Q_R=Q_R(X)$.
Then for $0<r<R$, we have
\begin{equation*}
\sup_{t-r^2\le s\le t}
\int_{Q_{r,s}}\abs{\vec{u}(\cdot,s)}^2+
\int_{Q_r}\abs{D\vec{u}}^2\le \frac{C}{(R-r)^2} \int_{Q_R}\abs{\vec{u}}^2.
\end{equation*}
\end{lemma}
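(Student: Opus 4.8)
The plan is to prove this Caccioppoli-type energy estimate by testing the parabolic system against $\eta^2 \vec{u}$, where $\eta = \eta(x)$ is a smooth spatial cutoff function adapted to the radii $r < R$. Specifically, I would choose $\eta \in C_c^\infty(B_R(x))$ with $\eta \equiv 1$ on $B_r(x)$, $0 \le \eta \le 1$, and $\abs{D\eta} \le C/(R-r)$; since the coefficients are time-independent the cutoff need not depend on $t$, which simplifies the argument. Because we are working on the backward cylinder $Q_R(X)$ whose parabolic boundary includes the bottom $\set{s = t - R^2}$, I will also need a cutoff in the time variable, or equivalently I will integrate the time-derivative term from the bottom slice up to an arbitrary level $s_0 \in (t - r^2, t]$, handling the fact that there is no control on $\vec{u}$ at the bottom by instead using a Lipschitz cutoff $\chi(s)$ that vanishes for $s \le t - R^2$ and equals $1$ for $s \ge t - r^2$, with $\abs{\chi'} \le C/(R-r)^2$.

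The key computation is as follows. Fix $s_0 \in (t - r^2, t]$. Multiply the $i$-th equation of \eqref{eqn:P-01} by $\eta^2 \chi^2 u^i$, sum over $i$, and integrate over $B_R(x) \times (t - R^2, s_0)$. Integration by parts in $x$ on the second-order term, using that $\eta$ has compact support in $B_R$, produces the coercive term $\int \eta^2 \chi^2 \ip{\vec{A}^{\alpha\beta} D_\beta \vec{u}, D_\alpha \vec{u}}$ plus a cross term involving $D\eta$; the time-derivative term becomes $\frac12 \int \partial_s(\eta^2 \chi^2 \abs{\vec{u}}^2) - \frac12 \int \partial_s(\eta^2\chi^2)\abs{\vec{u}}^2$, and the first of these integrates to the boundary value $\frac12 \int_{B_R} \eta^2 \chi^2(s_0) \abs{\vec{u}(\cdot,s_0)}^2$ (the bottom contribution vanishes since $\chi(t-R^2) = 0$). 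Applying the ellipticity lower bound in \eqref{eqn:para} to the coercive term, bounding the upper bound in \eqref{eqn:para} on the cross term, and using Young's inequality to absorb $\varepsilon \int \eta^2\chi^2 \abs{D\vec{u}}^2$ into the coercive term, one arrives at
\begin{equation*}
\int_{B_R}\eta^2\chi^2(s_0)\abs{\vec{u}(\cdot,s_0)}^2 + \int \eta^2\chi^2\abs{D\vec{u}}^2
\le C\int \left(\abs{D\eta}^2\chi^2 + \eta^2 \abs{\chi'}\chi\right)\abs{\vec{u}}^2
\le \frac{C}{(R-r)^2}\int_{Q_R}\abs{\vec{u}}^2.
\end{equation*}
Since $\eta\chi \equiv 1$ on $Q_r$ and on $Q_{r,s_0}$ for $s_0 \in (t-r^2,t]$, dropping $\eta,\chi$ on the left and taking the supremum over $s_0 \in [t-r^2,t]$ yields the claimed inequality.

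The main technical point to be careful about — rather than a deep obstacle — is the justification of the integration by parts in $t$ and the boundary term at $s = s_0$: this requires knowing that $s \mapsto \int_{B_R} \eta^2 \abs{\vec{u}(\cdot,s)}^2$ is absolutely continuous, which under the standing smoothness assumption on weak solutions (stated in the ``Weak solutions'' subsection) is immediate, and in general follows from the regularity $\vec{u} \in L^2(Q)$, $D\vec{u}\in L^2(Q)$ together with the equation giving $\vec{u}_t \in L^2((a,b); H^{-1})$ via the standard Lions--Magenes lemma. Because the paper explicitly allows us to assume enough smoothness that weak solutions are classical, I would simply note this and proceed; the constant $C$ depends only on $n$, $N$, $\nu_0$, and $M_0$ through the applications of \eqref{eqn:para} and Young's inequality, consistent with the paper's convention on universal constants. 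One should also remark that the same test-function argument applied to the adjoint system \eqref{eqn:P-02} on the forward cylinder $Q_R^*(X)$ gives the analogous estimate, which will presumably be needed later.
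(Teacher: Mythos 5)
Your proof is correct: the standard Caccioppoli argument with a spatial cutoff $\eta(x)$ and a time cutoff $\chi(s)$ vanishing at the bottom of $Q_R$, tested against $\eta^2\chi^2\vec{u}$ and combined with ellipticity and Young's inequality, yields exactly the stated bound with $C=C(\nu_0,M_0)$. The paper does not prove this lemma itself but simply cites Lady\v{z}enskaja--Solonnikov--Ural'ceva (Lemma 2.1, p.~139), and your argument is essentially the standard proof given there, so there is nothing further to compare.
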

\begin{proof}
See e.g., \cite[Lemma 2.1, p. 139]{LSU}.
\end{proof}

\begin{lemma}[Parabolic Poincar\'{e} inequality]
\label{lem:P-02}
Let $\vec{u}$ be a weak solution of \eqref{eqn:P-01} in $Q_R=Q_R(X)$.
Then there is some constant $C=C(n,M_0)$ such that
\begin{equation*}
\int_{Q_R}\abs{\vec{u}-\overline{\vec{u}}_{Q_R}}^2\le C R^2
\int_{Q_R}\abs{D\vec{u}}^2.
\end{equation*}
\end{lemma}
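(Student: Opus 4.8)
The plan is a standard device: subtract the time-dependent spatial mean of $\vec{u}$, estimate the remaining spatial oscillation slicewise by the ordinary Poincar\'e inequality on a ball, and estimate the oscillation in time of that mean by the one-dimensional Poincar\'e inequality on the interval $(t-R^2,t)$. The only non-routine ingredient is that, because $\vec{u}$ \emph{solves} \eqref{eqn:P-01} and the coefficients are time-independent, the time derivative of a suitably weighted spatial average of $\vec{u}$ can be written, after integration by parts, purely in terms of $D\vec{u}$; this is what lets us avoid the term involving $\vec{u}_t$ that would otherwise appear.

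First I would fix a cutoff $\phi=\phi(x)\in C^\infty_c(B_R)$ with $0\le\phi\le1$, $\phi\equiv1$ on $B_{R/2}$, and $\abs{D\phi}\le C/R$; then $\int_{B_R}\phi\,dx\ge c_n\abs{B_R}$. Set $\vec{v}(s)=\bigl(\int_{B_R}\phi\bigr)^{-1}\int_{B_R}\vec{u}(x,s)\,\phi(x)\,dx$, a function of $s$ alone. Since $\overline{\vec{u}}_{Q_R}$ minimizes $\vec{c}\mapsto\int_{Q_R}\abs{\vec{u}-\vec{c}}^2$, it is enough to bound $\int_{Q_R}\abs{\vec{u}-\vec{c}}^2$ for a convenient constant vector $\vec{c}$, and I would split
\[
\int_{Q_R}\abs{\vec{u}-\vec{c}}^2\le 2\int_{Q_R}\abs{\vec{u}(x,s)-\vec{v}(s)}^2\,dx\,ds+2\int_{Q_R}\abs{\vec{v}(s)-\vec{c}}^2\,dx\,ds .
\]
For the first term, compare both $\vec{u}(\cdot,s)$ and $\vec{v}(s)$ to the unweighted spatial mean $\abs{B_R}^{-1}\int_{B_R}\vec{u}(x,s)\,dx$ and apply the classical Poincar\'e inequality on $B_R$ for a.e.\ $s$ (using $\int_{B_R}\phi\gtrsim\abs{B_R}$ to control the weighted mean in terms of the same quantity); integrating in $s$ gives $\int_{Q_R}\abs{\vec{u}-\vec{v}}^2\le C(n)\,R^2\int_{Q_R}\abs{D\vec{u}}^2$.

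For the second term, take $\vec{c}=\frac1{R^2}\int_{t-R^2}^t\vec{v}(s)\,ds$ and use the one-dimensional Poincar\'e inequality on an interval of length $R^2$, $\int_{t-R^2}^t\abs{\vec{v}(s)-\vec{c}}^2\,ds\le C\,R^4\int_{t-R^2}^t\abs{\vec{v}'(s)}^2\,ds$. Here the equation enters: since $\phi$ and the coefficients do not depend on $t$, differentiating under the integral sign and using \eqref{eqn:P-01} followed by integration by parts — there is no boundary term because $\phi$ has compact support in $B_R$ — gives
\[
\vec{v}'(s)=\Bigl(\textstyle\int_{B_R}\phi\Bigr)^{-1}\!\int_{B_R}\vec{u}_s(x,s)\,\phi\,dx=-\Bigl(\textstyle\int_{B_R}\phi\Bigr)^{-1}\!\int_{B_R}\sum_{\alpha,\beta=1}^n\vec{A}^{\alpha\beta}(x)D_\beta\vec{u}\,D_\alpha\phi\,dx .
\]
By the upper bound in \eqref{eqn:para}, $\abs{D\phi}\le C/R$, and the Cauchy--Schwarz inequality, $\abs{\vec{v}'(s)}^2\le C\,M_0^2\,R^{-2}\abs{B_R}^{-1}\int_{B_R}\abs{D\vec{u}(x,s)}^2\,dx$; inserting this into the one-dimensional bound, integrating, and multiplying through by $\abs{B_R}$ yields $\int_{Q_R}\abs{\vec{v}(s)-\vec{c}}^2\le C\,M_0^2\,R^2\int_{Q_R}\abs{D\vec{u}}^2$. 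Combining the two estimates gives the claim with $C=C(n,M_0)$.

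The main obstacle — and essentially the only place real work is needed — is controlling the oscillation of $\vec{u}$ in time: for an arbitrary function on $Q_R$ the Poincar\'e inequality would cost an additional term $C\,R^4\int_{Q_R}\abs{\vec{u}_t}^2$, which is not at our disposal. The remedy above replaces $\vec{u}_t=L\vec{u}$ by $\sum_{\alpha,\beta}D_\alpha(\vec{A}^{\alpha\beta}D_\beta\vec{u})$ through the PDE, the cutoff $\phi$ being introduced precisely so that this integration by parts generates no boundary contribution; this is also the single point at which time-independence of the coefficients is used.
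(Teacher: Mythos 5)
Your argument is correct, and it is essentially the standard proof behind the result the paper simply cites (Struwe's Lemma~3): slicewise spatial Poincar\'e plus control of the time oscillation of a cutoff-weighted spatial average via the equation, with the cutoff killing the boundary term in the integration by parts. One small correction to your commentary: time-independence of the coefficients is not actually used anywhere --- only the time-independence of the cutoff $\phi$ matters when differentiating $\vec{v}$, and the bound $M_0$ is uniform in $s$, so the same proof (and the cited lemma) holds for time-dependent coefficients as well.
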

\begin{proof}
See e.g., \cite[Lemma 3]{Struwe}.
\end{proof}

\begin{lemma}
\label{lem:P-04}
Let $Q_{2R}=Q_{2R}(X_0)$ be a cylinder in $\Real^{n+1}$.
Suppose $\vec{u}\in L^2(Q_{2R})$ and
there are positive constants $\mu\le 1$ and $M$ such that
for any $X\in Q_R$ and any $r\in (0,R)$ we have
\begin{equation*}
\int_{Q_r(X)}\abs{\vec{u}-\overline{\vec{u}}_{Q_{r}(X)}}^2\le M^2r^{n+2+2\mu}.
\end{equation*}
Then $\vec{u}$ is H\"{o}lder continuous in $Q_R$ with the exponent $\mu$
and $[\vec{u}]_{C^{\mu}_P(Q_R)}\le C(n,\mu)M$.
\end{lemma}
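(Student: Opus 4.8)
The plan is to run the classical Campanato-type argument, adapted to the parabolic metric $d_P$. Write $\abs{Q_r(X)}=\omega_n r^{n+2}$ with $\omega_n=\abs{B_1}$, so the hypothesis says exactly that the mean-square oscillation of $\vec{u}$ over $Q_r(X)$ is $O(Mr^{\mu})$. First I would compare averages at consecutive scales: for $X\in Q_R$ and $0<\rho\le r<R$ one has $Q_\rho(X)\subset Q_r(X)$, so by Jensen's inequality and the hypothesis at $Q_r(X)$,
\[
\abs{\overline{\vec{u}}_{Q_\rho(X)}-\overline{\vec{u}}_{Q_r(X)}}^2
\le\frac{1}{\abs{Q_\rho(X)}}\int_{Q_r(X)}\abs{\vec{u}-\overline{\vec{u}}_{Q_r(X)}}^2
\le\frac{1}{\omega_n}\Bigl(\frac{r}{\rho}\Bigr)^{n+2}M^2 r^{2\mu},
\]
and with $\rho=r/2$ this gives $\abs{\overline{\vec{u}}_{Q_{r/2}(X)}-\overline{\vec{u}}_{Q_r(X)}}\le C(n)Mr^{\mu}$.

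Next I would telescope to define the pointwise value. Fixing $X\in Q_R$ and $r\in(0,R)$ and applying the previous estimate along $r_k=2^{-k}r$, the series $\sum_k 2^{-k\mu}$ converges, so $\bigl(\overline{\vec{u}}_{Q_{r_k}(X)}\bigr)_{k\ge0}$ is Cauchy; calling its limit $\vec{u}^{*}(X)$, one gets $\abs{\vec{u}^{*}(X)-\overline{\vec{u}}_{Q_r(X)}}\le C(n,\mu)Mr^{\mu}$ for every $r\in(0,R)$. Because $(\Real^{n+1},d_P,dX)$ is a space of homogeneous type and each $Q_r(X)$ is half of the $d_P$-ball of radius $r$ about $X$ (so the family shrinks regularly to $X$), the Lebesgue differentiation theorem gives $\vec{u}^{*}=\vec{u}$ a.e.\ in $Q_R$; replacing $\vec{u}$ by $\vec{u}^{*}$ leaves the hypothesis intact and makes $\vec{u}$ defined everywhere on $Q_R$.

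For the H\"{o}lder bound, let $X=(x,t),X'=(x',t')\in Q_R$ and $d:=d_P(X,X')$, and suppose first $d<R/2$ and (after swapping if needed) $t'\le t$. Then $\abs{x-x'}\le d$ and $0\le t-t'\le d^2$, which immediately give $Q_d(X)\cup Q_d(X')\subset Q_{2d}(X)$. Writing
\[
\vec{u}(X)-\vec{u}(X')=\bigl(\vec{u}(X)-\overline{\vec{u}}_{Q_d(X)}\bigr)+\bigl(\overline{\vec{u}}_{Q_d(X)}-\overline{\vec{u}}_{Q_{2d}(X)}\bigr)+\bigl(\overline{\vec{u}}_{Q_{2d}(X)}-\overline{\vec{u}}_{Q_d(X')}\bigr)+\bigl(\overline{\vec{u}}_{Q_d(X')}-\vec{u}(X')\bigr),
\]
the telescoping estimate bounds the first and last terms by $C(n,\mu)Md^{\mu}$, the scale-comparison estimate bounds the second by $C(n)Md^{\mu}$, and for the third one uses $Q_d(X')\subset Q_{2d}(X)$ together with Jensen and the hypothesis at $Q_{2d}(X)$ to get $\abs{\overline{\vec{u}}_{Q_{2d}(X)}-\overline{\vec{u}}_{Q_d(X')}}^2\le\abs{Q_d(X')}^{-1}\int_{Q_{2d}(X)}\abs{\vec{u}-\overline{\vec{u}}_{Q_{2d}(X)}}^2\le C(n)M^2 d^{2\mu}$. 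Hence $\abs{\vec{u}(X)-\vec{u}(X')}\le C(n,\mu)Md^{\mu}$ whenever $d<R/2$; the remaining range $R/2\le d<2R$ (any two points of $Q_R$ are at $d_P$-distance $<2R$) is reduced to this by chaining through a bounded number of intermediate points of $Q_R$, moving first in the spatial variables and then in time with consecutive steps of $d_P$-length $<R/2$, which costs only a dimensional factor. This yields $[\vec{u}]_{C^{\mu}_P(Q_R)}\le C(n,\mu)M$.

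The one genuinely parabolic point — and the place I expect to need the most care — is that the cylinders $Q_r(X)$ are one-sided in time, so $Q_r(X)$ and $Q_r(X')$ are not nested for nearby $X$, $X'$; the elementary inclusion $Q_d(X)\cup Q_d(X')\subset Q_{2d}(X)$ into the cylinder centered at the later of the two points is precisely what makes the Campanato telescoping go through. Everything else is the classical scheme, and the only measure-theoretic input, Lebesgue differentiation along $\{Q_r(X)\}_{r>0}$, is standard because $d_P$ turns $\Real^{n+1}$ into a doubling metric measure space.
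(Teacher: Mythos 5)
Your argument is correct: the paper does not prove this lemma itself but cites Lieberman (Lemma 4.3, p.~50), and the proof there is exactly this parabolic Campanato iteration — comparison of averages at dyadic scales, telescoping to a continuous representative, and the inclusion $Q_d(X')\subset Q_{2d}(X)$ (cylinder centered at the later point) to compare the two chains, so your proposal follows essentially the same route as the paper's reference. The only points to state carefully in a write-up are the Lebesgue differentiation step along the one-sided cylinders (which works since $\abs{Q_r(X)}$ is a fixed fraction of the $d_P$-ball) and the chaining for $d_P(X,X')\ge R/2$, both of which you have handled.
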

\begin{proof}
See e.g., \cite[Lemma 4.3, p. 50]{L}.
\end{proof}

\begin{definition}[Local boundedness property]
We say that the system \eqref{eqn:P-01}
satisfies the local boundedness property for weak solutions
if there is a constant
$M$ such that all weak solutions $\vec{u}$ of \eqref{eqn:P-01}
in $Q_{2r}(X)$ satisfy the estimates
\begin{equation*}
\sup_{Q_r(X)}\abs{\vec{u}}\le M\left(\frac{1}{\abs{Q_{2r}}}
\int_{Q_{2r}(X)}\abs{\vec{u}}^2 \right)^{1/2}.
\end{equation*}
Similarly, we say that the adjoint system \eqref{eqn:P-02}
satisfies the local boundedness property if the corresponding estimates hold
for weak solutions $\vec{u}$ of \eqref{eqn:P-02} in $Q_{2r}^{*}(X)$.
\end{definition}

\begin{theorem}[Theorem~1.1, \cite{HK}]
\label{thm:P-01}
Assume that the system \eqref{eqn:P-01} and its adjoint system
\eqref{eqn:P-02}
satisfy the local boundedness property for weak solutions.
Then the fundamental solution of the system \eqref{eqn:P-01} has an
upper bound
\begin{equation}
\abs{\vec{\Gamma}(x,t;y)}_{op}\le C_0
t^{-{n/2}}\exp\left(-\frac{k_0\abs{x-y}^2}{t}\right),
\end{equation}
where $\abs{\vec{\Gamma}(x,t;y)}_{op}$ denotes the operator norm
of the fundamental matrix $\vec{\Gamma}(x,t;y)$.
Here, $C_0=C_0(n,\nu_0,M_0,M)$ and $k_0=k_0(\nu_0,M_0)$.
\end{theorem}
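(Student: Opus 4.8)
The plan is to deduce the Gaussian bound from the local boundedness hypothesis by a Nash--Davies type argument: first turn local boundedness into an on‑diagonal ($L^1\to L^\infty$) bound for the solution operator, then rerun the same scheme for the exponentially twisted operator $e^{\psi}P_te^{-\psi}$ to produce the off‑diagonal Gaussian factor. Here $P_t$ denotes the solution operator of \eqref{eqn:P-01}, so that $(P_t\vec f)(x)=\int_{\Real^n}\vec\Gamma(x,t;y)\vec f(y)\,dy$, $P_t^{*}$ is the solution operator of the adjoint system \eqref{eqn:P-02}, and $\psi$ is a bounded Lipschitz truncation of the linear weight $z\mapsto\lambda\,\vec e\cdot z$, with $\vec e$ a fixed unit vector and $\lambda\ge 0$. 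The smoothness convention of Section~\ref{sec:N} lets us treat $\vec\Gamma$ as an honest smooth matrix with enough decay in $x$ to justify the integrations by parts below.

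The first step is a weighted $L^2$ energy estimate: testing \eqref{eqn:P-01} against $e^{2\psi}\vec u$, integrating by parts, using the ellipticity \eqref{eqn:para}, and absorbing the cross term by Young's inequality gives
\begin{equation*}
\frac{d}{dt}\int_{\Real^n}e^{2\psi}\abs{\vec u(\cdot,t)}^2+\nu_0\int_{\Real^n}e^{2\psi}\abs{D\vec u(\cdot,t)}^2\le C\lambda^2\int_{\Real^n}e^{2\psi}\abs{\vec u(\cdot,t)}^2 ,
\end{equation*}
so by Gronwall $\sup_{0\le s\le t}\norm{e^{\psi}\vec u(\cdot,s)}_{L^2}^2\le e^{C\lambda^2 t}\norm{e^{\psi}\vec u(\cdot,0)}_{L^2}^2$ with $C=C(\nu_0,M_0)$; the identical computation applies to \eqref{eqn:P-02}. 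The second step upgrades this to a twisted $L^2\to L^\infty$ bound. If $\vec u(\cdot,t)=P_t\vec f$, I would apply the local boundedness property on a cylinder $Q_{2r}(x,t)$ with $r\sim\sqrt t$, so that $\abs{\vec u(x,t)}^2\le C\,t^{-(n+2)/2}\int_{Q_{\sqrt t}(x,t)}\abs{\vec u}^2$, multiply through by $e^{2\psi(x)}$, and use that $\abs{\psi(x)-\psi(z)}\le\lambda\sqrt t$ on that cylinder. Combining with the energy estimate and absorbing the spurious factor $e^{\lambda\sqrt t}$ via $\lambda\sqrt t\le\tfrac14+\lambda^2 t$ gives the key point,
\begin{equation*}
\norm{e^{\psi}P_t e^{-\psi}}_{L^2\to L^\infty}\le C\,t^{-n/4}e^{C\lambda^2 t},\qquad C=C(n,\nu_0,M_0,M)\ \text{independent of}\ \lambda .
\end{equation*}

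Running the same argument for the adjoint system \eqref{eqn:P-02} and dualizing (the $L^2$‑transpose of $e^{\psi}P_te^{-\psi}$ is $e^{-\psi}P_t^{*}e^{\psi}$, a twist with the same Lipschitz constant, and $\norm{T}_{L^1\to L^2}=\norm{T^{*}}_{L^2\to L^\infty}$) yields $\norm{e^{\psi}P_te^{-\psi}}_{L^1\to L^2}\le C\,t^{-n/4}e^{C\lambda^2 t}$, and composing through $P_t=P_{t/2}P_{t/2}$ gives the twisted on‑diagonal bound $\abs{e^{\psi(x)}\vec\Gamma(x,t;y)e^{-\psi(y)}}_{op}\le C\,t^{-n/2}e^{C\lambda^2 t}$. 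Passing to the untruncated weight $\psi(z)=\lambda\,\vec e\cdot z$ with $\vec e=(x-y)/\abs{x-y}$ turns this into $\abs{\vec\Gamma(x,t;y)}_{op}\le C\,t^{-n/2}\exp\!\bigl(C\lambda^2 t-\lambda\abs{x-y}\bigr)$ for every $\lambda\ge 0$; minimizing the exponent over $\lambda$ (optimum $\lambda=\abs{x-y}/(2Ct)$) produces the asserted estimate with $k_0=k_0(\nu_0,M_0)$ and $C_0=C_0(n,\nu_0,M_0,M)$.

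The main obstacle I anticipate is not the algebra but making the twisted argument legitimate with the \emph{unbounded} weight: one must carry out the two steps above with bounded Lipschitz truncations $\psi_R$ — for which the weighted integrals are visibly finite because $\vec\Gamma(\cdot,t;y)$ and its gradient already decay — keep all constants uniform in $R$, and only then let $R\to\infty$. One must also promote the local energy estimate of Lemma~\ref{lem:P-03} to the global weighted identity used above and be careful with the transpose/duality bookkeeping in the matrix‑valued, non‑self‑adjoint setting. The structural observation that keeps the scheme from collapsing — that the constant in the twisted on‑diagonal bound can be taken independent of $\lambda$ — rests precisely on comparing the weight $e^{2\psi}$ across the small cylinder $Q_{\sqrt t}(x,t)$, which is why local boundedness, rather than a merely global $L^2$ bound, is exactly the right hypothesis.
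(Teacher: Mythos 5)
The paper does not actually prove this theorem: it is imported verbatim from \cite{HK} (Theorem 1.1 there), and the only in-paper ``proof'' is the citation. Your Davies-type scheme --- weighted $L^2$ (Gaffney) estimates via Gronwall, local boundedness on cylinders of radius $\sim\sqrt t$ to get the twisted $L^2\to L^\infty$ bound with constant independent of $\lambda$, duality through the adjoint system \eqref{eqn:P-02}, composition $P_t=P_{t/2}P_{t/2}$, and optimization in $\lambda$ --- is essentially the argument of \cite{HK}, and your outline is correct, including the caveats you flag about truncating the weight and the transpose bookkeeping.
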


\section{Main results}
\label{sec:M}
\begin{definition}
We say that an elliptic system \eqref{eqn:E-01}
satisfies the H\"{o}lder estimates for weak solutions at every scale
if there exist constants $\mu_0>0$ and $H_0$ such that
all weak solutions $\vec{u}$ of the system
in $B_{2r}=B_{2r}(x_0)$ satisfy the following estimates
\begin{equation}
\label{eqn:M-04}
[\vec{u}]_{C^{\mu_0}(B_r)}\le H_0 r^{-(n/2+\mu_0)}
\norm{\vec{u}}_{L^2(B_{2r})}.
\end{equation}
Similarly, we say that a parabolic system \eqref{eqn:P-01}
satisfies H\"{o}lder estimates for weak solutions at every scale
if there exist constants $\mu_1>0$ and $H_1$ such that
all weak solutions $\vec{u}$ of the system
in $Q_{2r}=Q_{2r}(X_0)$ satisfy the following estimates
\begin{equation}
\label{eqn:M-05}
[\vec{u}]_{C^{\mu_1}_P(Q_r)}\le H_1 r^{-(n/2+1+\mu_1)}
\norm{\vec{u}}_{L^2(Q_{2r})}.
\end{equation}
\end{definition}

\begin{remark}
Elliptic systems with constant coefficients satisfy the above property,
and in that case, the ellipticity condition \eqref{eqn:para}
can be weakened and replaced by the Legendre-Hadamard condition.
De Giorgi's theorem \cite{DG57} states that the property is satisfied if $N=1$.
The property is also satisfied if $n=2$ and
it is due to Morrey (see Corollary~\ref{thm:M-04}).
Some other examples include, for instance,
a certain three dimensional elliptic system which was studied
by Kang and the author in \cite{KK}.
\end{remark}

We shall prove the following main results in this paper:
\begin{theorem}
\label{thm:M-02}
If an elliptic system \eqref{eqn:E-01} satisfies
the H\"{o}lder estimates for weak solutions at every scale, then
the corresponding parabolic system \eqref{eqn:P-01} with time-independent
coefficients also satisfies the H\"{o}lder estimates for weak solutions at
every scale.
\end{theorem}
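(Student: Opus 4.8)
The plan is to deduce the parabolic Hölder estimate \eqref{eqn:M-05} from the elliptic estimate \eqref{eqn:M-04} by freezing time and exploiting a Campanato-type characterization of Hölder spaces, namely Lemma~\ref{lem:P-04}. Thus it suffices to show that there are constants $\mu_1>0$ and $C$ (depending only on $n,\nu_0,M_0,H_0,\mu_0$) such that every weak solution $\vec{u}$ of \eqref{eqn:P-01} in $Q_{2r}(X_0)$ satisfies the Campanato estimate
\begin{equation*}
\int_{Q_\rho(X)}\abs{\vec{u}-\overline{\vec{u}}_{Q_\rho(X)}}^2\le C\,\rho^{\,n+2+2\mu_1}\,r^{-(n+2+2\mu_1)}\norm{\vec{u}}_{L^2(Q_{2r})}^2
\end{equation*}
for all $X\in Q_r(X_0)$ and $0<\rho<r$. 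Once this is established, Lemma~\ref{lem:P-04} immediately gives $[\vec{u}]_{C^{\mu_1}_P(Q_r)}\le C r^{-(n/2+1+\mu_1)}\norm{\vec{u}}_{L^2(Q_{2r})}$, which is exactly \eqref{eqn:M-05}.

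The key observation is that if the coefficients are time-independent, then for a weak solution $\vec{u}(x,t)$ of the parabolic system, the time derivative $\vec{u}_t$ is again a weak solution of the \emph{same} parabolic system (differentiate \eqref{eqn:P-01} in $t$; the coefficients do not depend on $t$). This lets one control the oscillation of $\vec{u}$ in the time direction using the Caccioppoli/energy estimate Lemma~\ref{lem:P-03} applied to $\vec{u}_t$, combined with the $L^2$ bound on $\vec{u}_t$ that follows from the energy inequality and the equation itself. So I would split the oscillation of $\vec{u}$ over $Q_\rho$ into a spatial part and a temporal part. For the temporal part, on each time slice the difference $\vec{u}(\cdot,s)-\vec{u}(\cdot,s')$ is controlled by $|s-s'|$ times $\sup\abs{\vec{u}_t}$, and one iterates the energy estimate for $\vec{u}_t$ (and possibly $\vec{u}_{tt}$, or simply an $L^2\to L^2$ smoothing argument in $t$) to get that $\vec{u}_t\in L^2$ with quantitative scaling, hence a Hölder-in-$t$ type bound with exponent $1$ (or any exponent $\le 1$). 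For the spatial part, on a fixed time slice $\{s\}$ the slice $\vec{u}(\cdot,s)$ need not solve the elliptic system; but one writes $\vec{u}(\cdot,s)=\vec{v}+\vec{w}$ where $\vec{v}$ solves the elliptic system $L\vec{v}=0$ in $B_\rho$ with $\vec{v}=\vec{u}(\cdot,s)$ on $\partial B_\rho$, and $\vec{w}$ solves $L\vec{w}=\vec{u}_t(\cdot,s)$ with zero boundary data. The hypothesis \eqref{eqn:M-04} gives a decay estimate for $\int_{B_\sigma}\abs{\vec{v}-\overline{\vec{v}}_{B_\sigma}}^2$ of the form $(\sigma/\rho)^{n+2\mu_0}$ times the corresponding integral over $B_\rho$, while energy estimates bound $\vec{w}$ in terms of $\norm{\vec{u}_t}_{L^2}$, which is in turn small in the appropriate power of $\rho$. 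Averaging these slicewise estimates in $s$ over the time interval of $Q_\rho$ and summing the spatial and temporal contributions yields the Campanato estimate above, with $\mu_1=\min(\mu_0,\text{something})$, via the standard iteration lemma for quantities satisfying $\phi(\sigma)\le C[(\sigma/\rho)^{n+2\mu_0}+ (\sigma/\rho)^{-?}\varepsilon]\phi(\rho)+\text{lower order}$.

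The main obstacle I anticipate is handling the spatial slices cleanly: a time slice of a parabolic solution is not a solution of the elliptic system, so the elliptic hypothesis \eqref{eqn:M-04} cannot be applied directly, and one must pass through the elliptic/inhomogeneous decomposition and carefully track how the ``error'' term $L^{-1}\vec{u}_t$ scales in $\rho$. Getting the bookkeeping of powers of $\rho$ right — so that the frozen-time argument closes the Campanato iteration with a genuinely positive exponent $\mu_1$ — is the delicate point; in particular one needs the $L^2$ norm of $\vec{u}_t$ on $Q_\rho$ to beat the natural scaling, and this is precisely where time-independence of the coefficients (allowing $\vec{u}_t$ to be treated as a solution and iterated) is essential. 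A secondary technical point is that the regularity theory for the elliptic Dirichlet problem with merely bounded measurable coefficients requires the boundary data $\vec{u}(\cdot,s)$ to lie in $W^{1,2}(B_\rho)$ with good bounds, which is supplied by Lemma~\ref{lem:P-03}, and one should be slightly careful that these hold for a.e.\ time slice $s$ and are measurable in $s$ so the averaging in $s$ is legitimate.
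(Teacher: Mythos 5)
Your overall strategy is the same as the paper's: freeze time, use time-independence to conclude that $\vec{u}_t$ solves the same system and hence is in $L^2$ on slices with quantitative bounds, view $\vec{u}(\cdot,s)$ as a solution of the inhomogeneous elliptic system $L\vec{u}(\cdot,s)=\vec{u}_t(\cdot,s)$, run a comparison/Campanato iteration against the homogeneous elliptic problem (this is exactly what Lemma~\ref{lem:M-03} packages), and finish with the Campanato characterization of parabolic H\"older continuity (Lemma~\ref{lem:P-04}). However, there is a genuine gap in the exponent bookkeeping that you yourself flag but do not resolve: with $\vec{u}_t(\cdot,s)$ controlled only in $L^2$ (Morrey exponent $\lambda=0$), the slicewise iteration yields $\int_{B_\rho}\abs{D\vec{u}(\cdot,s)}^2\lesssim \rho^{\gamma-2}$ only for $\gamma<\min(4,n+2\mu_0)$, and after integrating in time and applying the parabolic Poincar\'e inequality one needs $\gamma>n$ to obtain a positive exponent $\mu_1$. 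So your argument, as written, closes only for $n\le 3$. For $n\ge 4$ the paper runs a bootstrap which is the real content in high dimensions: since $\vec{u}_t$ is again a solution, the same slicewise argument applied to $\vec{u}_t$ shows $\vec{u}_t(\cdot,s)\in M^{2,\gamma}$ with $\gamma$ up to (almost) $4$; feeding this back as the datum $\vec{f}=\vec{u}_t$ with $\lambda=\gamma$ raises the attainable decay to $\gamma<\min(\gamma+4,n+2\mu_0)$, and one repeats, gaining $4$ per step and stopping after $[n/4]+1$ steps. Your sentence about ``allowing $\vec{u}_t$ to be treated as a solution and iterated'' gestures at this, but the proposal never states the Morrey-space upgrade of $\vec{u}_t(\cdot,s)$ that makes the iteration terminate with $\gamma>n$ in general dimension, so the claimed Campanato estimate with $\mu_1>0$ is unproven for $n\ge4$.

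Two smaller points. First, your control of the temporal oscillation by $\abs{s-s'}\sup\abs{\vec{u}_t}$ is circular, since pointwise bounds on $\vec{u}_t$ are not available at this stage (local boundedness is what the whole theorem is meant to deliver); the paper avoids the issue entirely by using Struwe's parabolic Poincar\'e inequality (Lemma~\ref{lem:P-02}), which bounds the full space--time mean oscillation $\int_{Q_\rho}\abs{\vec{u}-\overline{\vec{u}}_{Q_\rho}}^2$ by $C\rho^2\int_{Q_\rho}\abs{D\vec{u}}^2$ for solutions, so no separate time-direction argument is needed. Second, the statement that ``$\vec{u}_t\in L^2$ with quantitative scaling'' follows from the energy inequality is not automatic when the coefficients are non-symmetric: the standard proof of $\norm{\vec{u}_t}_{L^2(Q_r)}\le Cr^{-2}\norm{\vec{u}}_{L^2(Q_{2r})}$, and of the slicewise bounds $\norm{\vec{u}_t(\cdot,s)}_{L^2}$, $\norm{D\vec{u}(\cdot,s)}_{L^2}$ uniform in $s$, requires the separate argument of Lemmas~\ref{lem:M-01} and \ref{lem:M-02}, which again exploits time-independence (so that $\vec{u}_t$ is a solution) rather than symmetry. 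These slicewise uniform bounds are also what justify your ``a.e.\ slice'' concerns; they should be proved, not assumed.
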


\begin{theorem}
\label{thm:M-03}
Suppose that the elliptic system \eqref{eqn:E-01}
and its adjoint system \eqref{eqn:E-02} defined on $\Real^n$
both satisfy the H\"{o}lder estimates for weak solutions at every scale with
constants  $\mu_0, H_0$.
Let $\vec{\Gamma}(x,t;y)$ be the fundamental solution of
the parabolic system
\eqref{eqn:P-01}
with the time-independent coefficients
associated to the elliptic system \eqref{eqn:E-01}.
Then $\vec{\Gamma}(x,t;y)$ has an upper bound
\begin{equation}
\label{bound}
\abs{\vec{\Gamma}(x,t;y)}_{op}\le C_0
t^{-n/2}\exp\left(-\frac{k_0\abs{x-y}^2}{t}\right),
\end{equation}
where $C_0=C_0(n,\nu_0,M_0,\mu_0,H_0)$ and $k_0=k_0(\nu_0,M_0)$.
Here, $\abs{\vec{\Gamma}(x,t;y)}_{op}$ denotes the operator norm of fundamental
matrix $\vec{\Gamma}(x,t;y)$.
\end{theorem}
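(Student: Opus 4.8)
The plan is to reduce Theorem~\ref{thm:M-03} to the already-known Theorem~\ref{thm:P-01}, so the real content is producing the local boundedness property for the parabolic system \eqref{eqn:P-01} and its adjoint \eqref{eqn:P-02} out of the elliptic Hölder hypothesis. Since the adjoint system \eqref{eqn:E-02} has coefficients $(\vec{A}^{\alpha\beta})^{*}$ satisfying the same ellipticity bounds \eqref{eqn:para} with the same $\nu_0, M_0$, and is assumed to satisfy elliptic Hölder estimates at every scale with the same $\mu_0, H_0$, everything we prove for \eqref{eqn:P-01} applies verbatim to \eqref{eqn:P-02} after the time reversal $t \mapsto -t$ that turns $Q^{*}_r$ into $Q_r$. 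So I would state once and for all: it suffices to show that \eqref{eqn:P-01} satisfies the local boundedness property, with constant depending only on $n, \nu_0, M_0, \mu_0, H_0$; then apply Theorem~\ref{thm:P-01}.

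First I would invoke Theorem~\ref{thm:M-02}: the elliptic Hölder estimates at every scale for \eqref{eqn:E-01} give the parabolic Hölder estimates at every scale \eqref{eqn:M-05} for \eqref{eqn:P-01}, with constants $\mu_1, H_1$ depending only on $n, \nu_0, M_0, \mu_0, H_0$. (This is exactly the hard analytic step, but it is isolated in Theorem~\ref{thm:M-02}, which I am entitled to assume.) Next I would deduce the local boundedness property from \eqref{eqn:M-05} by a routine argument: given a weak solution $\vec{u}$ of \eqref{eqn:P-01} in $Q_{2r}(X)$ and any point $X' \in Q_r(X)$, pick a small cylinder $Q_\rho(X') \subset Q_{2r}(X)$ with $\rho \sim r$, apply \eqref{eqn:M-05} on $Q_\rho(X')$ to bound $[\vec{u}]_{C^{\mu_1}_P(Q_{\rho/2}(X'))}$ by $C r^{-(n/2+1+\mu_1)}\norm{\vec{u}}_{L^2(Q_{2r}(X))}$, and combine the Hölder seminorm bound with the trivial $L^2$-average control of the pointwise value at a point of oscillation — i.e. write $\abs{\vec{u}(X')} \le \abs{\vec{u}(X'') } + [\vec{u}]_{C^{\mu_1}_P} d_P(X',X'')^{\mu_1}$, average over $X''$ in a subcylinder, and use Hölder's inequality to pass from the $L^1$- to the $L^2$-average. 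This yields $\sup_{Q_r(X)}\abs{\vec{u}} \le M\,(\abs{Q_{2r}}^{-1}\int_{Q_{2r}(X)}\abs{\vec{u}}^2)^{1/2}$ with $M = M(n,\nu_0,M_0,\mu_0,H_0)$.

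Having established the local boundedness property for both \eqref{eqn:P-01} and \eqref{eqn:P-02}, the Gaussian upper bound \eqref{bound} is immediate from Theorem~\ref{thm:P-01}, and the constants $C_0, k_0$ inherit precisely the claimed dependence: $C_0 = C_0(n,\nu_0,M_0,\mu_0,H_0)$ because $M$ depends on these quantities, and $k_0 = k_0(\nu_0,M_0)$ as in Theorem~\ref{thm:P-01}. I expect the only genuinely delicate point — aside from citing Theorem~\ref{thm:M-02} — to be the bookkeeping that the adjoint parabolic system genuinely falls under the same hypotheses: one must check that $(\vec{A}^{\alpha\beta})^{*} = (\vec{A}^{\beta\alpha})^{T}$ preserves \eqref{eqn:para} (it does, by symmetry of the roles of $\vec{\xi}$ and $\vec{\eta}$ in the bilinear form), and that time-reversal maps weak solutions of \eqref{eqn:P-02} on $Q^{*}_r$ to weak solutions of a parabolic system of the same type on $Q_r$ whose elliptic part is \eqref{eqn:E-02}. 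Everything else is standard.
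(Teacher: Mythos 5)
Your proposal is correct and follows essentially the same route as the paper: invoke Theorem~\ref{thm:M-02} to get the parabolic H\"{o}lder estimates at every scale, convert them into the local boundedness property by the oscillation-plus-averaging argument (exactly the paper's estimate \eqref{eqn:final} followed by H\"{o}lder's inequality), handle the adjoint system by the same symmetry/time-reversal observation the paper leaves as a routine modification, and conclude via Theorem~\ref{thm:P-01}. The only cosmetic difference is that you apply the H\"{o}lder estimate on small cylinders inside $Q_{2r}$ to match the definition of local boundedness verbatim, whereas the paper works from $Q_{4r}$ and absorbs the discrepancy by scaling; both are fine.
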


\begin{remark}
\label{rmk:local}
We would like to point out that \eqref{bound} is a global estimate.
Especially, the bound \eqref{bound} holds for all time $t>0$.
Suppose that the elliptic system \eqref{eqn:E-01} and its adjoint system
\eqref{eqn:E-02} enjoy the H\"{o}lder estimates
for weak solutions up to a fixed scale $R_0$; that is,
there is a number $R_0>0$ such that if $\vec{u}$ is a weak solution of
either \eqref{eqn:E-01} or \eqref{eqn:E-02} in $B_r=B_r(x)$
with $0<r\le R_0$, then
$\vec{u}$ is H\"{o}lder continuous and satisfies
\begin{equation*}
[\vec{u}]_{C^{\mu_0}(B_r)}\le H_0 r^{-(n/2+\mu_0)}
\norm{\vec{u}}_{L^2(B_{2r})}.
\end{equation*}
Then, the statement regarding the bound \eqref{bound}
for the fundamental solution should be localized as follows:
For any given $T>0$, there are constants
$k_0=k_0(\nu_0,M_0)$
and $C_0=C_0(n,\nu_0,M_0,\mu_0,H_0,R_0,T)$
such that \eqref{bound} holds for $0<t\le T$.
\end{remark}

\begin{corollary}
\label{thm:M-04}
Let $\vec{\Gamma}(x,t;y)$ be the fundamental solution
of the parabolic system
\eqref{eqn:P-01}
with time-independent coefficients
associated to an elliptic
system \eqref{eqn:E-01} defined on $\Real^2$.
Then $\vec{\Gamma}(x,t;y)$ has an upper bound
\eqref{bound} with the constants $C_0, k_0$
depending only on the ellipticity constants
$\nu_0, M$.
\end{corollary}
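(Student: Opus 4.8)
The plan is to deduce the corollary from Theorem~\ref{thm:M-03} by verifying its hypothesis in the special case $n=2$: that is, I will show that every elliptic system \eqref{eqn:E-01} on $\Real^2$, together with its adjoint \eqref{eqn:E-02}, satisfies the H\"{o}lder estimates for weak solutions at every scale \eqref{eqn:M-04}, and that the constants $\mu_0$ and $H_0$ can be taken to depend only on $\nu_0$ and $M_0$. Granting this, Theorem~\ref{thm:M-03} immediately gives the bound \eqref{bound} with $C_0=C_0(2,\nu_0,M_0,\mu_0,H_0)=C_0(\nu_0,M_0)$ and $k_0=k_0(\nu_0,M_0)$, which is exactly the assertion of the corollary.

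First I would dispose of the adjoint system. Its coefficients are $(\vec{A}^{\alpha\beta})^{*}=(\vec{A}^{\beta\alpha})^T$, and a direct relabeling of the summation indices $\alpha\leftrightarrow\beta$, $i\leftrightarrow j$ shows that $\ip{(\vec{A}^{\alpha\beta})^{*}\vec{\xi}_\beta,\vec{\xi}_\alpha}=\ip{\vec{A}^{\alpha\beta}\vec{\xi}_\beta,\vec{\xi}_\alpha}$, and likewise that the boundedness pairing is unchanged in absolute value. Hence \eqref{eqn:para} holds for the adjoint coefficients with the \emph{same} constants $\nu_0,M_0$, so it suffices to establish \eqref{eqn:M-04} for an arbitrary elliptic system \eqref{eqn:E-01} on $\Real^2$; the estimate for the adjoint then follows automatically with identical constants.

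For that estimate I would run the classical Morrey--Widman argument. Let $\vec{u}$ be a weak solution of \eqref{eqn:E-01} in $B_{2r}(x_0)$. Testing the equation against $\eta^2(\vec{u}-\vec{c})$ with a standard cutoff $\eta$ equal to $1$ on $B_\rho(x)$, supported in $B_{2\rho}(x)$, with $\abs{D\eta}\le C/\rho$, and using \eqref{eqn:para} together with Young's inequality, yields the Caccioppoli inequality $\int_{B_\rho(x)}\abs{D\vec{u}}^2\le C(\nu_0,M_0)\rho^{-2}\int_{B_{2\rho}(x)\setminus B_\rho(x)}\abs{\vec{u}-\vec{c}}^2$ for any constant vector $\vec{c}$. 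Choosing $\vec{c}$ to be the mean of $\vec{u}$ over the annulus and applying the Poincar\'{e} inequality there, one gets $\varphi(\rho)\le\theta\,\varphi(2\rho)$ where $\varphi(\rho)=\int_{B_\rho(x)}\abs{D\vec{u}}^2$ and $\theta=\theta(\nu_0,M_0)\in(0,1)$; iterating gives $\varphi(\rho)\le C(\rho/s)^{2\mu_0}\varphi(s)$ with $2\mu_0=\log_2(1/\theta)$. Feeding this back through the Poincar\'{e} inequality and a final application of Caccioppoli with $\vec{c}=0$ gives, for $x\in B_r(x_0)$ and $\rho\le r$,
\begin{equation*}
\int_{B_\rho(x)}\abs{\vec{u}-\overline{\vec{u}}_{B_\rho(x)}}^2
\le C\rho^2\,\varphi(\rho)
\le C\,\rho^{\,2+2\mu_0}\,r^{-(2+2\mu_0)}\int_{B_{2r}(x_0)}\abs{\vec{u}}^2 .
\end{equation*}
Since $n=2$, the exponent $2+2\mu_0$ equals $n+2\mu_0$, so the (elliptic analogue of the) Campanato characterization of H\"{o}lder spaces used in Lemma~\ref{lem:P-04} gives $[\vec{u}]_{C^{\mu_0}(B_r)}\le H_0\,r^{-(n/2+\mu_0)}\norm{\vec{u}}_{L^2(B_{2r})}$ with $H_0=H_0(\nu_0,M_0)$, which is \eqref{eqn:M-04}.

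The only real care needed is bookkeeping: one must check that the Caccioppoli constant, the annular Poincar\'{e} constant, and hence $\theta$, $\mu_0$ and $H_0$ all depend on nothing but $\nu_0$ and $M_0$ (the dimension being fixed at $2$), and one must set up the hole-filling iteration so that the Campanato exponent coincides with $n$. This last point is precisely where the restriction to two dimensions is genuinely used: in higher dimensions the $\rho^2$ gain coming from Poincar\'{e} applied to $D\vec{u}\in L^2$ is not enough to reach a Campanato exponent exceeding $n$, so the argument would not close. With \eqref{eqn:M-04} now available for both \eqref{eqn:E-01} and \eqref{eqn:E-02} with constants depending only on $\nu_0,M_0$, an application of Theorem~\ref{thm:M-03} completes the proof.
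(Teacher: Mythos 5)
Your proposal is correct and follows the same overall route as the paper: verify the hypothesis of Theorem~\ref{thm:M-03} in the case $n=2$, observe that the adjoint coefficients $(\vec{A}^{\alpha\beta}){}^{*}=(\vec{A}^{\beta\alpha})^T$ satisfy \eqref{eqn:para} with the same $\nu_0,M_0$, and conclude. The only difference is how the two-dimensional H\"{o}lder property \eqref{eqn:M-04} is obtained: the paper simply cites Morrey's theorem (\cite[pp.~143--148]{Morrey}) as a black box, whereas you reprove it by the classical hole-filling (Widman) argument --- Caccioppoli with right-hand side supported in an annulus, annular Poincar\'{e}, the resulting decay $\varphi(\rho)\le\theta\,\varphi(2\rho)$, and the Campanato characterization, which in dimension two lands exactly at exponent $n+2\mu_0$. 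Your version buys an explicit, self-contained verification that $\mu_0$ and $H_0$ (hence $C_0,k_0$) depend only on $\nu_0,M_0$, and it isolates precisely where $n=2$ is used; the paper's citation is shorter but leaves that dependence to the reference. The remaining nesting details you defer (e.g.\ running the iteration only for $\rho\le r/2$ so that the doubled balls stay inside $B_{2r}(x_0)$, then covering) are indeed routine bookkeeping.
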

\begin{proof}
First, let us recall the well known theorem of Morrey
which states that any two dimensional elliptic system
\eqref{eqn:E-01} with bounded measurable coefficients
satisfies the H\"{o}lder estimates
for weak solutions at every scale, with the constants $\mu_0, H_0$
depending only on the ellipticity constants
(see, \cite[pp. 143--148]{Morrey}).
Next, note that the ellipticity constants $\nu_0, M_0$ in
\eqref{eqn:para} remain unchanged for
$\tilde{A}{}^{\alpha\beta}_{ij}(x)=A^{\beta\alpha}_{ji}(x)$. 
Therefore, the corollary is an immediate consequence of Theorem~\ref{thm:M-03}.
\end{proof}

\begin{remark}
In fact, the converse of Theorem~\ref{thm:P-01} is also true
(see \cite[Theorem~1.2]{HK}).
Therefore, in order to extend the above corollary to
the parabolic system with time-dependent coefficients, one needs to show
that the system satisfies the local boundedness property for weak solutions.
Unfortunately, we do not know whether it is true or not if the coefficients
are allowed to depend on the time variable.
If $n\ge 3$, it is not true in general,
even for the time-independent coefficients case
since there is a famous counter-example due to De Giorgi (see \cite{DG68}).
\end{remark}
\section{Proof of Main Results}
\label{sec:P}
\subsection{Some technical lemmas and proofs}
\begin{lemma}
\label{lem:M-01}
If $\vec{u}$ is a weak solution of
the parabolic system with time-independent coefficients \eqref{eqn:P-01}
in $Q_{R}=Q_{R}(X_0)$, then
$\vec{u}_t\in L^2(Q_{r})$ for $r<R$ and 
satisfies the estimates
\begin{equation}
\norm{\vec{u}_t}_{L^2(Q_{r})} \le C(R-r)^{-1} \norm{D\vec{u}}_{L^2(Q_{R})}.
\end{equation}
In particular,
if $\vec{u}$ is a weak solution of \eqref{eqn:P-01}
in $Q_{2r}$, then the above estimates together with the energy estimates
yield
\begin{equation}
\label{eqn:M-00}
\norm{\vec{u}_t}_{L^2(Q_{r})} \le Cr^{-2}
\norm{\vec{u}}_{L^2(Q_{2r})}.
\end{equation}
\end{lemma}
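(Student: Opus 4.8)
The plan is to exploit the crucial feature that the coefficients are time-independent: this makes $\vec{u}_t$ itself (formally) a solution of the same parabolic system, so that difference quotients in $t$ can be controlled by the energy estimate. Concretely, fix $0<r<R$ and choose an intermediate radius $\rho$ with $r<\rho<R$. For $h\neq 0$ small, set $\vec{v}^h(x,t)=h^{-1}(\vec{u}(x,t+h)-\vec{u}(x,t))$. Because $\vec{A}^{\alpha\beta}=\vec{A}^{\alpha\beta}(x)$ does not depend on $t$, the translated function $\vec{u}(\cdot,\cdot+h)$ is again a weak solution of \eqref{eqn:P-01} in the correspondingly translated cylinder, and hence so is $\vec{v}^h$ in a slightly smaller cylinder, say $Q_\rho$ for $h$ small. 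Applying the energy estimate (Lemma~\ref{lem:P-03}) to $\vec{v}^h$ on the pair $Q_r\subset Q_\rho$ gives
\begin{equation*}
\int_{Q_r}\abs{D\vec{v}^h}^2\le \frac{C}{(\rho-r)^2}\int_{Q_\rho}\abs{\vec{v}^h}^2
\le \frac{C}{(\rho-r)^2}\int_{Q_{\rho'}}\abs{D\vec{u}}^2,
\end{equation*}
where in the last step I use the standard fact that the $L^2$ norm of a difference quotient in $t$ is bounded by the $L^2$ norm of $\partial_t\vec{u}$, or — to avoid circularity, since $\vec{u}_t\in L^2$ is exactly what we are proving — I instead bound $\int_{Q_\rho}\abs{\vec{v}^h}^2$ directly. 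Here is the cleaner route: test the weak formulation of the equation for $\vec{u}$ against $\vec{v}^h$ with an appropriate cutoff, or simply observe that $\int\abs{h^{-1}(\vec u(t+h)-\vec u(t))}^2$ is, by Jensen, at most $h^{-1}\int_t^{t+h}\int\abs{\vec u_s}^2\,ds$; but again this presupposes $\vec u_t\in L^2$ on a slightly larger set. The genuinely safe argument is the Galerkin/Steklov one below.

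The technically honest version: test the equation. Let $\zeta(x)$ be a cutoff, $\zeta\equiv 1$ on $B_r$, supported in $B_\rho$, $\abs{D\zeta}\le C(\rho-r)^{-1}$, and let $\eta(t)$ be a cutoff in time with $\eta\equiv 1$ on $(t_0-r^2,t_0)$, supported in $(t_0-\rho^2,t_0)$. Using $\zeta^2\eta^2\,\vec{v}^h$ as a test function in the weak formulation for $\vec{v}^h$ (legitimate since $\vec{v}^h$ solves the system), and integrating by parts in $x$ using ellipticity \eqref{eqn:para}, one obtains the Caccioppoli-type inequality
\begin{equation*}
\sup_s\int_{B_\rho}\zeta^2\eta^2\abs{\vec{v}^h(\cdot,s)}^2+\int\!\!\int\zeta^2\eta^2\abs{D\vec{v}^h}^2
\le C\!\int\!\!\int\left(\abs{D\zeta}^2\eta^2+\zeta^2\abs{\eta\eta_t}\right)\abs{\vec{v}^h}^2.
\end{equation*}
Now the right side only involves $\abs{\vec v^h}^2$ on $Q_\rho$, and there one may freely use $\int_{Q_\rho}\abs{\vec v^h}^2\le \int_{Q_{\rho}+h}\abs{D_t\text{-averaged }}\cdots$; more precisely, the elementary bound $\norm{\tau_h f-f}_{L^2}\le \abs{h}\,\norm{D_t f}_{L^2}$ is not available, but its reverse-free cousin is: since we have not yet shown $\vec u_t\in L^2$, we instead bound $\int_{Q_\rho}\abs{\vec v^h}^2$ by $\frac{C}{(\rho-r)^2}\int_{Q_R}\abs{\vec u}^2$ directly from Lemma~\ref{lem:P-03} applied to $\vec u$ itself (translation-invariance in $t$ again), after which one lets $h\to 0$ and uses weak lower semicontinuity of the $L^2$ norm to conclude $\vec u_t\in L^2(Q_r)$ with $\norm{\vec u_t}_{L^2(Q_r)}=\lim_{h\to0}\norm{\vec v^h}_{L^2(Q_r)}\le C(R-r)^{-1}\norm{D\vec u}_{L^2(Q_R)}$, where the last inequality comes from combining the Caccioppoli estimate above with a further application of the energy estimate to pass from $\abs{\vec u}^2$ on the right to $\abs{D\vec u}^2$ on a slightly larger cylinder, or — more directly — by testing the original equation for $\vec u$ against $\eta^2\vec u_t$ (formally) to get $\int\eta^2\abs{\vec u_t}^2\le C\int(\eta^2+\abs{\eta\eta_t})\abs{D\vec u}^2$, the standard "second energy estimate" for time-independent coefficients.

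The main obstacle is purely bookkeeping: arranging the argument so that one never invokes $\vec{u}_t\in L^2$ before it has been established, i.e.\ running the difference-quotient estimate so that the right-hand side always contains only $D\vec u$ (or $\vec u$) on a fixed larger cylinder, and then passing to the limit. Once the first displayed estimate is in hand, the "in particular" statement \eqref{eqn:M-00} is immediate: apply it with $R$ replaced by $\tfrac32 r$ and combine with the energy estimate (Lemma~\ref{lem:P-03}) on the pair $Q_{3r/2}\subset Q_{2r}$, giving $\norm{\vec u_t}_{L^2(Q_r)}\le Cr^{-1}\norm{D\vec u}_{L^2(Q_{3r/2})}\le Cr^{-1}\cdot r^{-1}\norm{\vec u}_{L^2(Q_{2r})}=Cr^{-2}\norm{\vec u}_{L^2(Q_{2r})}$.
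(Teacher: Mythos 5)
Your underlying idea --- that time-independence of the coefficients makes $\vec{u}_t$ (or its difference quotient $\vec{v}^h$) again a solution of \eqref{eqn:P-01}, so that the energy estimate of Lemma~\ref{lem:P-03} can be brought to bear --- is exactly the idea the paper uses. But the way you close the estimate has a genuine gap, in two places. First, the bound you ultimately rely on, namely $\int_{Q_\rho}\abs{\vec{v}^h}^2\le \frac{C}{(\rho-r)^2}\int_{Q_R}\abs{\vec{u}}^2$ ``directly from Lemma~\ref{lem:P-03} applied to $\vec u$ itself,'' is not something that lemma gives: the energy estimate controls $\sup_s\int\abs{\vec u(\cdot,s)}^2$ and $\int\abs{D\vec u}^2$, not the time-difference quotient, and the only bound on $\vec v^h$ that follows trivially from $\norm{\vec u}_{L^2}$ carries a factor $h^{-2}$ and so is useless in the limit $h\to 0$. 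Likewise your first display, bounding $\int_{Q_\rho}\abs{\vec v^h}^2$ by $\int\abs{D\vec u}^2$, is unjustified as written --- controlling a time difference quotient by the spatial gradient is precisely the content of the lemma and requires using the equation, not just real-variable facts. Second, your ``more direct'' fallback, testing against $\eta^2\vec u_t$ to get the standard second energy estimate $\int\eta^2\abs{\vec u_t}^2\le C\int(\eta^2+\abs{\eta\eta_t})\abs{D\vec u}^2$, silently uses the identity $\ip{\vec A^{\alpha\beta}D_\beta\vec u,D_\alpha\vec u_t}=\tfrac12\frac{d}{dt}\ip{\vec A^{\alpha\beta}D_\beta\vec u,D_\alpha\vec u}$, which requires the symmetry $A^{\alpha\beta}_{ij}=A^{\beta\alpha}_{ji}$. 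The paper explicitly does not assume symmetry, and its proof exists precisely because this ``standard'' route fails; your Caccioppoli inequality for $\vec v^h$ does not rescue matters either, since its right-hand side again involves $\abs{\vec v^h}^2$ on the larger cylinder, so the loop never closes.

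The missing mechanism is an absorption-plus-iteration argument. The paper tests against $\zeta^2\vec u_t$ and Cauchy--Schwarz gives $\int_{Q_\tau}\zeta^2\abs{\vec u_t}^2\le\epsilon\int_{Q_\tau}\zeta^2\abs{D\vec u_t}^2+\frac{C}{\epsilon}\int_{Q_\tau}\zeta^2\abs{D\vec u}^2+C\int_{Q_\tau}\abs{D\zeta}^2\abs{D\vec u}^2$; then, because $\vec u_t$ itself solves \eqref{eqn:P-01} (this is where time-independence enters), Lemma~\ref{lem:P-03} applied to $\vec u_t$ gives $\int_{Q_\tau}\zeta^2\abs{D\vec u_t}^2\le\frac{C_0}{(\tau-\sigma)^2}\int_{Q_\tau}\abs{\vec u_t}^2$. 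Choosing $\epsilon=(\tau-\sigma)^2/2C_0$ yields $\int_{Q_\sigma}\abs{\vec u_t}^2\le\frac12\int_{Q_\tau}\abs{\vec u_t}^2+\frac{C}{(\tau-\sigma)^2}\int_{Q_\tau}\abs{D\vec u}^2$ for all $\sigma<\tau\le R$, and the standard iteration lemma (Giaquinta) then absorbs the first term and produces $\int_{Q_r}\abs{\vec u_t}^2\le\frac{C}{(R-r)^2}\int_{Q_R}\abs{D\vec u}^2$. (In the paper's setting weak solutions are qualitatively smooth, so testing against $\zeta^2\vec u_t$ is legitimate and your difference-quotient bookkeeping is unnecessary; the point of the lemma is only that the constants do not depend on that smoothness.) Your derivation of \eqref{eqn:M-00} from the first estimate, via the energy estimate on $Q_{3r/2}\subset Q_{2r}$, is fine and matches the paper.
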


\begin{proof}
We first note that if the coefficients are symmetric,
(i.e., $A_{ij}^{\alpha\beta}=A_{ji}^{\beta\alpha}$)
this is a well known result; a proof for such a case
is found, for example,
in \cite[pp. 172--181]{LSU} or in \cite[pp. 360--364]{Evans}.
However, the standard proof does not carry over to the non-symmetric
coefficients case and for that reason, we provide a self-contained proof here.

Fix positive numbers $\sigma,\tau$ such that $\sigma<\tau\le R$.
Let $\zeta$ be a smooth cut-off function such that $\zeta\equiv 1$
in $Q_\sigma$, vanishes near $\partial_P Q_{\tau}$, and
satisfies
\begin{equation*}
\label{eqn:K-01}
0\le \zeta \le 1\quad\text{and}\quad
\abs{\zeta_t}+\abs{D\zeta}^2 \le C(\tau-\sigma)^{-2}.
\end{equation*}
Note that on each slice $Q_{\tau,s}$, we have
\begin{equation*}
\begin{split}
0&=\int_{Q_{\tau,s}}\left(\vec{u}_t-D_{\alpha}(\vec{A}^{\alpha\beta}D_\beta
\vec{u})\right)\cdot\zeta^2\vec{u}_t\\
&=\int_{Q_{\tau,s}}\zeta^2\abs{\vec{u}_t}^2+\int_{Q_{\tau,s}}\zeta^2
\ip{\vec{A}^{\alpha\beta}D_\beta\vec{u},D_\alpha\vec{u}_t}+
\int_{Q_{\tau,s}}
2\zeta\ip{\vec{A}^{\alpha\beta}D_\beta\vec{u},D_\alpha\zeta\vec{u}_t}.
\end{split}
\end{equation*}
Therefore, we find by using the Cauchy-Schwarz inequality that
\begin{equation*}
\begin{split}
\int_{Q_{\tau,s}}\zeta^2\abs{\vec{u}_t}^2
&\le C\int_{Q_{\tau,s}}\zeta^2\abs{D\vec{u}}\abs{D\vec{u}_t}
+C\int_{Q_{\tau,s}}\zeta\abs{D\vec{u}}\abs{D\zeta}\abs{\vec{u}_t}\\
&\le \frac{\epsilon}{2}\int_{Q_{\tau,s}}\zeta^2\abs{D\vec{u}_t}^2+
\frac{C}{\epsilon}\int_{Q_{\tau,s}}\zeta^2\abs{D\vec{u}}^2+
C\int_{Q_{\tau,s}}\abs{D\zeta}^2\abs{D\vec{u}}^2 \\
&+\frac{1}{2}\int_{Q_{\tau,s}}\zeta^2\abs{\vec{u}_t}^2.
\end{split}
\end{equation*}
Thus we have
\begin{equation}
\label{eqn:K-02}
\qquad
\int_{Q_\tau}\zeta^2\abs{\vec{u}_t}^2
\le \epsilon\int_{Q_\tau}\zeta^2\abs{D\vec{u}_t}^2+
\frac{C}{\epsilon}\int_{Q_\tau}\zeta^2\abs{D\vec{u}}^2+
C\int_{Q_\tau}\abs{D\zeta}^2\abs{D\vec{u}}^2.
\end{equation}
Since $\vec{u}_t$
also satisfies \eqref{eqn:P-01}, the energy estimates
yield
\begin{equation}
\label{eqn:K-03}
\int_{Q_{\tau}}\zeta^2\abs{D\vec{u}_t}^2
\le \frac{C_0}{(\tau-\sigma)^2}\int_{Q_{\tau}}\abs{\vec{u}_t}^2.
\end{equation}
This is the part where we exploit the assumption
that the coefficients are time-independent.
Combining  \eqref{eqn:K-02} and \eqref{eqn:K-03}, we have
\begin{equation*}
\int_{Q_{\sigma}}\abs{\vec{u}_t}^2
\le \frac{C_0\epsilon}{(\tau-\sigma)^2}\int_{Q_{\tau}}\abs{\vec{u}_t}^2+
\frac{C}{\epsilon}\int_{Q_\tau}\abs{D\vec{u}}^2+
\frac{C}{(\tau-\sigma)^2}\int_{Q_{\tau}}\abs{D\vec{u}}^2.
\end{equation*}
If we set $\epsilon=(\tau-\sigma)^2/2C_0$, we finally obtain
\begin{equation*}
\int_{Q_{\sigma}}\abs{\vec{u}_t}^2
\le \frac{1}{2}\int_{Q_{\tau}}\abs{\vec{u}_t}^2+
\frac{C}{(\tau-\sigma)^2}\int_{Q_{\tau}}\abs{D\vec{u}}^2.
\end{equation*}
Here, we emphasize that
$C$ is a constant independent of $\sigma,\tau$.
Then by a standard iteration argument
(see e.g. \cite[Lemma~{3.1}, pp. 161]{Giaq83}),
we have
\begin{equation}
\label{eqn:K-05}
\int_{Q_{r}}\abs{\vec{u}_t}^2
\le \frac{C}{(R-r)^2}\int_{Q_{R}}\abs{D\vec{u}}^2
\quad\text{for } 0<r<R.
\end{equation}
The proof is complete.
\end{proof}

\begin{lemma}
\label{lem:M-02}
If $\vec{u}$ is a weak solution of
the parabolic system with time-independent coefficients \eqref{eqn:P-01}
in $Q_{2r}=Q_{2r}(X_0)$, then
$D\vec{u}(\cdot,s), \vec{u}_t(\cdot,s)\in L^2(Q_{r,s})$
for all $s\in[t_0-r^2,t_0]$,
and satisfy the following estimates uniformly in $s\in[t_0-r^2,t_0]$.
\begin{eqnarray}
\label{eqn:M-11}
\norm{D\vec{u}(\cdot,s)}_{L^2(Q_{r,s})}
\le C r^{-2} \norm{\vec{u}}_{L^2(Q_{2r})}, \\
\label{eqn:M-12}
\norm{\vec{u}_t(\cdot,s)}_{L^2(Q_{r,s})}
\le C r^{-3} \norm{\vec{u}}_{L^2(Q_{2r})}.
\end{eqnarray}
\end{lemma}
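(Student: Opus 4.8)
The plan is to use, just as in the proof of Lemma~\ref{lem:M-01}, that the coefficients are time-independent, so that $\vec{u}_t$ is again a weak solution of \eqref{eqn:P-01}; this is what allows us to feed $\vec{u}_t$ into the energy estimates of Lemma~\ref{lem:P-03}. The qualitative memberships $D\vec{u}(\cdot,s),\vec{u}_t(\cdot,s)\in L^2(Q_{r,s})$ follow from the standing smoothness assumption on the coefficients, so only the uniform bounds \eqref{eqn:M-11}--\eqref{eqn:M-12} require an argument, and it is convenient to treat \eqref{eqn:M-12} first. Write $X_0=(x_0,t_0)$.

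\emph{The bound \eqref{eqn:M-12}.} Running the argument behind \eqref{eqn:M-00} with the concentric pair $Q_r\subset Q_{2r}$ replaced by $Q_{7r/4}\subset Q_{2r}$ (combine the first inequality of Lemma~\ref{lem:M-01}, applied with outer cylinder $Q_{15r/8}$, with Lemma~\ref{lem:P-03} applied on $Q_{15r/8}\subset Q_{2r}$), one gets $\norm{\vec{u}_t}_{L^2(Q_{7r/4})}\le Cr^{-2}\norm{\vec{u}}_{L^2(Q_{2r})}$. Since $\vec{u}_t$ solves \eqref{eqn:P-01} in $Q_{7r/4}$, Lemma~\ref{lem:P-03} applied to $\vec{u}_t$ with radii $7r/4$ and $3r/2$ gives
\[
\sup_{t_0-r^2\le s\le t_0}\int_{Q_{3r/2,s}}\abs{\vec{u}_t(\cdot,s)}^2\le \frac{C}{(r/4)^2}\int_{Q_{7r/4}}\abs{\vec{u}_t}^2\le Cr^{-6}\norm{\vec{u}}_{L^2(Q_{2r})}^2 .
\]
As $Q_{r,s}\subset Q_{3r/2,s}$, taking square roots yields \eqref{eqn:M-12}. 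For later use we also record, from the same line and from Lemma~\ref{lem:P-03} applied to $\vec{u}$ itself with radii $2r$ and $3r/2$, that uniformly in $s\in[t_0-r^2,t_0]$
\[
\norm{\vec{u}_t(\cdot,s)}_{L^2(B_{3r/2}(x_0))}\le Cr^{-3}\norm{\vec{u}}_{L^2(Q_{2r})},\qquad
\norm{\vec{u}(\cdot,s)}_{L^2(B_{3r/2}(x_0))}\le Cr^{-1}\norm{\vec{u}}_{L^2(Q_{2r})} .
\]

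\emph{The bound \eqref{eqn:M-11}.} Fix $s\in[t_0-r^2,t_0]$. On this slice $\vec{u}(\cdot,s)$ satisfies, in the weak sense, the \emph{elliptic} system $L\vec{u}(\cdot,s)=\vec{u}_t(\cdot,s)$ in $B_{3r/2}(x_0)$. Choosing a spatial cut-off $\zeta$ with $\zeta\equiv 1$ on $B_r(x_0)$, $\zeta\equiv 0$ outside $B_{3r/2}(x_0)$, $0\le\zeta\le 1$, $\abs{D\zeta}\le C/r$, testing this equation against $\zeta^2\vec{u}(\cdot,s)$, integrating by parts, and using the ellipticity and boundedness \eqref{eqn:para} together with Young's inequality to absorb the $\zeta^2\abs{D\vec{u}}^2$ terms, one arrives at
\[
\int_{B_r(x_0)}\abs{D\vec{u}(\cdot,s)}^2\le \frac{C}{r^2}\int_{B_{3r/2}(x_0)}\abs{\vec{u}(\cdot,s)}^2+C\norm{\vec{u}_t(\cdot,s)}_{L^2(B_{3r/2}(x_0))}\norm{\vec{u}(\cdot,s)}_{L^2(B_{3r/2}(x_0))} .
\]
Here it is essential to keep the term $\int\ip{\vec{u}_t,\zeta^2\vec{u}}$ as the \emph{product} of the two $L^2$-norms above, which by the previous step have the different sizes $r^{-3}\norm{\vec{u}}_{L^2(Q_{2r})}$ and $r^{-1}\norm{\vec{u}}_{L^2(Q_{2r})}$; inserting these (and the second one again into the first term on the right-hand side) makes both terms $\le Cr^{-4}\norm{\vec{u}}_{L^2(Q_{2r})}^2$, which is \eqref{eqn:M-11}, uniformly in $s$.

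The one point requiring care is the bookkeeping of powers of $r$: splitting $\int\ip{\vec{u}_t,\zeta^2\vec{u}}$ naively into two squares at the same scale would produce $r^{-6}$ in place of $r^{-4}$, so one must estimate it as a product of norms of different size as above (equivalently, use Young's inequality with weight $\sim r^2$). Everything else is the Caccioppoli/energy machinery already used in Lemmas~\ref{lem:P-03} and~\ref{lem:M-01}, now carried out on a single time slice.
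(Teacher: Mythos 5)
Your proof is correct and follows essentially the same route as the paper: exploit time-independence so that $\vec{u}_t$ again solves \eqref{eqn:P-01}, combine the energy estimates for $\vec{u}_t$ with Lemma~\ref{lem:M-01} to get \eqref{eqn:M-12}, and then run a slice-wise Caccioppoli estimate with test function $\zeta^2\vec{u}$ in which the $\vec{u}_t\cdot\vec{u}$ term is weighted (your product of norms of sizes $r^{-3}$ and $r^{-1}$ is exactly the paper's choice $\epsilon=r^2$ in Young's inequality) so as to produce $r^{-4}$ rather than $r^{-6}$. The only, harmless, difference is organizational: you nest intermediate radii $3r/2<7r/4<15r/8$ inside $Q_{2r}$ and obtain \eqref{eqn:M-11} directly, whereas the paper proves the slice gradient bound for solutions in $Q_{4r}$ and then recovers the stated form by a covering argument.
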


\begin{proof}
By the energy estimates applied to $\vec{u}_t$ we obtain
\begin{equation}
\label{eqn:M-13}
\sup_{t_0-r^2\le s\le t_0}\int_{Q_{r,s}}\abs{\vec{u}_t(\cdot,s)}^2
\le \frac{C}{r^2}\int_{Q_{3r/2}}
\abs{\vec{u}_t}^2.
\end{equation}
On the other hand, the estimates \eqref{eqn:K-05} and 
the energy estimates (this time, applied to $\vec{u}$ itself) yield
\begin{eqnarray}
\label{eqn:K-12}
\int_{Q_{3r/2}} \abs{\vec{u}_t}^2 \le
\frac{C}{r^2} \int_{Q_{7r/4}} \abs{D\vec{u}}^2\le
\frac{C}{r^4} \int_{Q_{2r}}\abs{\vec{u}}^2.
\end{eqnarray}
Combining \eqref{eqn:M-13} and \eqref{eqn:K-12} together,
we have the estimates \eqref{eqn:M-12}.

Next, assume that $\vec{u}$ is a weak solution
of \eqref{eqn:P-01} in $Q_{4r}=Q_{4r}(X_0)$.
Let $\zeta$ be a smooth cut-off function such that 
$\zeta\equiv 1$ in $Q_r$, vanishes near $\partial_P Q_{2r}$, and satisfies
\begin{equation}
\label{eqn:K-30}
0\le \zeta \le 1\quad\text{and}\quad
\abs{\zeta_t}+\abs{D\zeta}^2 \le Cr^{-2}.
\end{equation}
Note that on each slice $Q_{2r,s}$, we have
\begin{equation*}
\begin{split}
0&=\int_{Q_{2r,s}}\left(\vec{u}_t-D_{\alpha}(\vec{A}^{\alpha\beta}D_\beta
\vec{u})\right)\cdot\zeta^2\vec{u}\\
&=\int_{Q_{2r,s}}\zeta^2\vec{u}_t\cdot\vec{u}+\int_{Q_{2r,s}}\zeta^2
\ip{\vec{A}^{\alpha\beta}D_\beta\vec{u},D_\alpha\vec{u}}+
2\zeta\ip{\vec{A}^{\alpha\beta}D_\beta\vec{u},D_\alpha\zeta\vec{u}}.
\end{split}
\end{equation*}
Using the ellipticity condition and the Cauchy-Schwarz inequality,
we find
\begin{equation*}
\begin{split}
\nu_0\int_{Q_{2r,s}}\zeta^2\abs{D\vec{u}}^2
&\le \int_{Q_{2r,s}}\zeta^2\abs{\vec{u}_t}\abs{\vec{u}}
+C\int_{Q_{2r,s}}\zeta\abs{D\vec{u}}\abs{D\zeta}\abs{\vec{u}}\\
&\le \frac{\epsilon\nu_0}{2}\int_{Q_{2r,s}}\zeta^2\abs{\vec{u}_t}^2+
\frac{C}{\epsilon}\int_{Q_{2r,s}}\zeta^2\abs{\vec{u}}^2+
C\int_{Q_{2r,s}}\abs{D\zeta}^2\abs{\vec{u}}^2 \\
&+\frac{\nu_0}{2}\int_{Q_{2r,s}}\zeta^2\abs{D\vec{u}}^2.
\end{split}
\end{equation*}
Then by \eqref{eqn:K-30}, \eqref{eqn:M-12},
and the energy estimates, for all $s\in[t_0-r^2,t_0]$, we have
\begin{equation}
\label{eqn:M-99}
\begin{split}
\int_{Q_{r,s}}\abs{D\vec{u}}^2
&\le \epsilon\int_{Q_{2r,s}}\abs{\vec{u}_t}^2+
\frac{C}{\epsilon}\int_{Q_{2r,s}}\abs{\vec{u}}^2+
\frac{C}{r^2}\int_{Q_{2r,s}}\abs{\vec{u}}^2 \\
&\le \frac{C\epsilon}{r^6}\int_{Q_{4r}}\abs{\vec{u}}^2+
\frac{C}{\epsilon r^2}\int_{Q_{4r}}\abs{\vec{u}}^2+
\frac{C}{r^4}\int_{Q_{4r}}\abs{\vec{u}}^2.
\end{split}
\end{equation}
If we set $\epsilon=r^2$, then the above estimates \eqref{eqn:M-99}
now become
\begin{equation*}
\int_{Q_{r,s}}\abs{D\vec{u}}^2 \le
\frac{C}{r^4}\int_{Q_{4r}}\abs{\vec{u}}^2,
\end{equation*}
from which the estimates \eqref{eqn:M-11} follows
by a well known covering argument.
\end{proof}

\begin{lemma}
\label{lem:M-03}
Assume that the elliptic system \eqref{eqn:E-01} satisfies
the H\"{o}lder estimates for weak solutions at every scale with constants
$\mu_0, H_0$.
Let $\vec{u}$ be a weak solution of the inhomogeneous elliptic system
\begin{equation}
D_{\alpha}(\vec{A}^{\alpha\beta}(x) D_{\beta}\vec{u})
=\vec{f}\quad\text{in}\quad B_{2}=B_{2}(x_0),
\end{equation}
where $\vec{f}$ belongs to the Morrey space $M^{2,\lambda}(B_{2})$
with $\lambda\ge 0$.
Then, for any $\gamma\ge 0$
with $\gamma<\gamma_0=\min(\lambda+4,n+2\mu_0)$
(we may take $\gamma=\gamma_0$ if $\gamma_0<n$)
there exists
$C=C(n,\nu_0,M_0,\mu_0,H_0,\lambda,\gamma)$ such that
$\vec{u}$ satisfies the following local estimates
\begin{equation}
\label{eqn:M-08}
\int_{B_r(x)}\abs{D\vec{u}}^2
\le C \left( r^{\gamma-2}
\int_{B_2}\abs{D\vec{u}}^2+ r^{\gamma-2}
\norm{\vec{f}}_{M^{2,\lambda}(B_2)}^2
\right)
\end{equation}
uniformly for all $x\in B_1=B_1(x_0)$ and $0<r\le 1$.
Moreover,
if $\gamma<n$,
then $\vec{u}$ belongs to the
Morrey space $M^{2,\gamma}(B_{1})$ and
\begin{equation}
\label{eqn:M-10}
\norm{\vec{u}}_{M^{2,\gamma}(B_{1})}\le 
C\left(\norm{\vec{u}}_{L^2(B_{2})}+
\norm{D\vec{u}}_{L^{2}(B_{2})}+
\norm{\vec{f}}_{M^{2,\lambda}(B_2)}\right).
\end{equation}
\end{lemma}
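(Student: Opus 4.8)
The plan is to run a Campanato-type freezing argument whose only genuine input is a decay estimate for solutions of the homogeneous system, which I extract from the hypothesized every-scale H\"older estimate \eqref{eqn:M-04}. \emph{Step 1 (decay for the homogeneous system).} I first claim that if $\vec{w}$ is a weak solution of $L\vec{w}=0$ in a ball $B_\rho(y)\subset B_2$, then
\begin{equation*}
\int_{B_r(y)}\abs{D\vec{w}}^2\le C\left(\frac{r}{\rho}\right)^{n-2+2\mu_0}\int_{B_\rho(y)}\abs{D\vec{w}}^2\qquad(0<r\le\rho).
\end{equation*}
To see this, apply the Caccioppoli inequality for \eqref{eqn:E-01} to the weak solution $\vec{w}-\overline{\vec{w}}_{B_{2r}(y)}$, bound $\norm{\vec{w}-\overline{\vec{w}}_{B_{2r}(y)}}_{L^2(B_{2r}(y))}$ by $Cr^{\mu_0}r^{n/2}[\vec{w}]_{C^{\mu_0}(B_{2r}(y))}$, then use \eqref{eqn:M-04} for the weak solution $\vec{w}-\overline{\vec{w}}_{B_\rho(y)}$, and finally Poincar\'e's inequality to replace $\norm{\vec{w}-\overline{\vec{w}}_{B_\rho(y)}}_{L^2(B_\rho(y))}$ by $C\rho\norm{D\vec{w}}_{L^2(B_\rho(y))}$; the range $\rho/4<r\le\rho$ is trivial.

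\emph{Step 2 (freezing and iteration).} For $x\in B_1$ and $0<\rho\le1$ we have $B_\rho(x)\subset B_2$, so I may write $\vec{u}=\vec{w}+\vec{v}$ in $B_\rho(x)$, where $L\vec{w}=0$ in $B_\rho(x)$ with $\vec{w}=\vec{u}$ on $\partial B_\rho(x)$, and $\vec{v}=\vec{u}-\vec{w}$ vanishes on $\partial B_\rho(x)$ and solves $L\vec{v}=\vec{f}$. Testing the equation for $\vec{v}$ against $\vec{v}$ and using ellipticity, Cauchy--Schwarz, Poincar\'e, and $\int_{B_\rho(x)}\abs{\vec{f}}^2\le\rho^\lambda\norm{\vec{f}}_{M^{2,\lambda}(B_2)}^2$ gives $\int_{B_\rho(x)}\abs{D\vec{v}}^2\le C\rho^{\lambda+2}\norm{\vec{f}}_{M^{2,\lambda}(B_2)}^2$. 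Combining this with Step~1 and the triangle inequality, and setting $\phi(r)=\int_{B_r(x)}\abs{D\vec{u}}^2$, I obtain
\begin{equation*}
\phi(r)\le C\left(\frac{r}{\rho}\right)^{n-2+2\mu_0}\phi(\rho)+C\rho^{\lambda+2}\norm{\vec{f}}_{M^{2,\lambda}(B_2)}^2\qquad(0<r\le\rho\le1).
\end{equation*}
A standard iteration argument (the same kind invoked in the proof of Lemma~\ref{lem:M-01}; see, e.g., \cite{Giaq83}) then yields $\phi(r)\le Cr^{\gamma-2}\bigl(\phi(1)+\norm{\vec{f}}_{M^{2,\lambda}(B_2)}^2\bigr)$ for every $\gamma<\gamma_0=\min(\lambda+4,n+2\mu_0)$: if $\lambda+4<n+2\mu_0$ one applies the iteration lemma with the exponents $n-2+2\mu_0$ and $\lambda+2$ and in fact gets $\gamma=\gamma_0$, while if $\lambda+4\ge n+2\mu_0$ one first replaces the factor $\rho^{\lambda+2}$ by the larger $\rho^{\gamma-2}$ (legitimate because $\rho\le1$ and $\gamma-2<n-2+2\mu_0$) before iterating. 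Since $\phi(1)=\int_{B_1(x)}\abs{D\vec{u}}^2\le\int_{B_2}\abs{D\vec{u}}^2$ when $x\in B_1$, this is precisely \eqref{eqn:M-08}.

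\emph{Step 3 (from \eqref{eqn:M-08} to \eqref{eqn:M-10}).} Assume $\gamma<n$. Then \eqref{eqn:M-08} says $D\vec{u}\in M^{2,\gamma-2}(B_1)$ with $\norm{D\vec{u}}_{M^{2,\gamma-2}(B_1)}\le C\bigl(\norm{D\vec{u}}_{L^2(B_2)}+\norm{\vec{f}}_{M^{2,\lambda}(B_2)}\bigr)$. For $x\in B_1$ and $0<r\le\rho\le1$ I split $\int_{B_r(x)}\abs{\vec{u}}^2\le 2\int_{B_\rho(x)}\abs{\vec{u}-\overline{\vec{u}}_{B_\rho(x)}}^2+2(r/\rho)^n\int_{B_\rho(x)}\abs{\vec{u}}^2$ and estimate the first term by $C\rho^2\int_{B_\rho(x)}\abs{D\vec{u}}^2\le C\rho^\gamma\norm{D\vec{u}}_{M^{2,\gamma-2}(B_1)}^2$ using Poincar\'e and \eqref{eqn:M-08}. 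A second application of the iteration argument---now with exponents $n$ and $\gamma$, which is exactly where $\gamma<n$ is needed---gives $\int_{B_r(x)}\abs{\vec{u}}^2\le Cr^\gamma\bigl(\norm{\vec{u}}_{L^2(B_2)}^2+\norm{D\vec{u}}_{L^2(B_2)}^2+\norm{\vec{f}}_{M^{2,\lambda}(B_2)}^2\bigr)$, which is \eqref{eqn:M-10}.

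I do not expect a serious obstacle here: the one place where PDE information is really used is Step~1, and there it is a direct consequence of \eqref{eqn:M-04} together with Caccioppoli and Poincar\'e. The only point demanding care is the exponent bookkeeping in Steps~2 and~3---keeping track of which of $\lambda+4$ and $n+2\mu_0$ is the smaller, handling the borderline case $\lambda+4=n+2\mu_0$, and making sure that every ball $B_\rho(x)$ in the argument is contained in $B_2$, which is what forces the restrictions $x\in B_1$, $0<r\le1$ in the statement.
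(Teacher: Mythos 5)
Your argument is correct and follows essentially the same route as the paper: the paper's proof also starts from the decay estimate $\int_{B_\rho}\abs{D\vec{u}}^2\le C H_0(\rho/r)^{n-2+2\mu_0}\int_{B_r}\abs{D\vec{u}}^2$ for homogeneous solutions (derived from \eqref{eqn:M-04}), then cites the standard freezing/iteration argument in Giaquinta for \eqref{eqn:M-08}, and obtains \eqref{eqn:M-10} via Poincar\'e and the well-known Campanato-to-Morrey implication for $\gamma<n$. You have simply written out in detail the steps the paper delegates to \cite{Giaq83}, and your exponent bookkeeping (including the case $\gamma=\gamma_0$ when $\lambda+4<n+2\mu_0$) is consistent with the statement.
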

\begin{proof}
First, we note that the property \eqref{eqn:M-04} implies that
for all $0<\rho<r$ and $x\in\Real^n$, we have
\begin{equation*}
\int_{B_\rho(x)}\abs{D\vec{u}}^2\le C\cdot H_0
\left(\frac{\rho}{r}\right)^{n-2+2\mu_0}
\int_{B_r(x)}\abs{D\vec{u}}^2.
\end{equation*}
In the light of the above observation,
the estimates \eqref{eqn:M-08} is quite standard and is found, for example,
in \cite[Chapter~3]{Giaq83}. Then, by Poincar\'{e} inequality we have
\begin{equation}
\label{eqn:M-07}
\int_{B_r(x)}\abs{\vec{u}-\overline{\vec{u}}_{B_r(x)}}^2
\le C r^{\gamma} \left(
\norm{D\vec{u}}_{L^2(B_2)}^2+
\norm{\vec{f}}_{M^{2,\lambda}(B_2)}^2\right)
\end{equation}
uniformly for all $x\in B_1=B_1(0)$ and $0<r\le 1$.
It is well known that if $\gamma<n$, then the estimates \eqref{eqn:M-07}
yield \eqref{eqn:M-10} (see e.g. \cite[Chapter~3]{Giaq83}).
\end{proof}

\subsection{Proof of Theorem~\ref{thm:M-02}}
Let
$\vec{u}$ be a weak solution of \eqref{eqn:P-01} in a cylinder $Q_4=Q_4(0)$.
We rewrite \eqref{eqn:P-01} as $L\vec{u}=\vec{u}_t$.
By Lemma~\ref{lem:M-02}, we find that
$\vec{u}_t(\cdot,s)$ is in $L^2(Q_{2,s})$ and 
satisfies
\begin{displaymath}
\norm{\vec{u}_t(\cdot,s)}_{L^2(Q_{2,s})}\le C \norm{\vec{u}}_{L^2(Q_4)}
\quad\text{for all }-4\le s\le 0.
\end{displaymath}
Therefore, we may apply Lemma~\ref{lem:M-03} with $\vec{f}=\vec{u}_t$
and $\lambda=0$, and then apply Lemma~\ref{lem:M-02} to find that
for all $x\in B_{1}(0)$ and $0<r\le 1$, we have
\begin{equation}
\label{eqn:X-11}
\begin{split}
\int_{B_{r}(x)}\abs{D\vec{u}(\cdot,s)}^2
&\le C r^{\gamma-2} \left(
\norm{D\vec{u}(\cdot,s)}_{L^2(Q_{2,s})}^2+
\norm{\vec{u}_t(\cdot,s)}_{L^{2}(Q_{2,s})}^2\right)\\
&\le C r^{\gamma-2} \norm{\vec{u}}_{L^{2}(Q_{4})}^2
\quad\text{uniformly in }s\in[-4,0]
\end{split}
\end{equation}
for all $\gamma<\min(4,n+2\mu_0)$.

By Lemma~\ref{lem:P-02} and then by \eqref{eqn:X-11} we find that
for all $X=(x,t)\in Q_1$ and $r\le 1$
\begin{equation}
\label{eqn:X-12}
\begin{split}
\int_{Q_r(X)}\abs{\vec{u}-\overline{\vec{u}}_{Q_r(X)}}^2
&\le C r^2\int_{t-r^2}^t\int_{B_{r}(x)}
\abs{D\vec{u}(y,s)}^2\,dy\,ds \\
&\le C r^{2+\gamma} \norm{\vec{u}}_{L^{2}(Q_{4})}^2.
\end{split}
\end{equation}
Note that if $n\le 3$, then we may write $\gamma=n+2\mu$ for some $\mu>0$.
In that case, \eqref{eqn:X-12} now reads
\begin{equation}
\int_{Q_r(X)}\abs{\vec{u}-\overline{\vec{u}}_{Q_r(X)}}^2
\le C r^{n+2+2\mu} \norm{\vec{u}}_{L^{2}(Q_{4})}^2
\end{equation}
for all $X\in Q_1$ and $r\le 1$.
Therefore, if $n\le 3$, then Lemma~\ref{lem:P-04} yields the estimates
\begin{equation}
\label{eqn:X-13}
[\vec{u}]_{C^{\mu}_P(Q_{1/2})}
\le C \norm{\vec{u}}_{L^{2}(Q_{4})}.
\end{equation}
We have thus shown that in the case when $n\le 3$, any weak solution
$\vec{u}$ of \eqref{eqn:P-01}
in a cylinder $Q_4=Q_4(0)$ satisfies
the above a priori estimates \eqref{eqn:X-13}
provided that the associated elliptic system
satisfies the H\"{o}lder estimates for weak solutions at every scale.
The general case is recovered as follows.
For given $X_0=(x_0,t_0)$ and $r>0$,
let us consider the new system
\begin{equation}
\label{eqn:scale}
\vec{u}_t-\tilde{L}\vec{u}
:=\vec{u}_t-\sum_{\alpha,\beta=1}^n
D_\alpha(\tilde{\vec{A}}{}^{\alpha\beta}(x) D_\beta\vec{u})=0,
\end{equation}
where $\tilde{\vec{A}}{}^{\alpha\beta}(x)=\vec{A}^{\alpha\beta}(x_0+rx)$.
Note that the associated elliptic system $\tilde{L}\vec{u}=0$
also satisfies the H\"{o}lder estimates for weak solutions at every scale.
Moreover, the ellipticity constants $\nu_0, M_0$ remain the same for
the new coefficients $\tilde{\vec{A}}{}^{\alpha\beta}$.
Let $\vec{u}$ be a weak solution of \eqref{eqn:P-01} in $Q_{4r}(X_0)$.
Then $\tilde{\vec{u}}(X)=\tilde{\vec{u}}(x,t):=\vec{u}(x_0+rx,t_0+r^2t)$
is a weak solution of \eqref{eqn:scale} in $Q_{4}(0)$ and thus
$\tilde{\vec{u}}$ satisfies
the estimates \eqref{eqn:X-13}. By rescaling back to $Q_{4r}(X_0)$,
the estimates \eqref{eqn:X-13} become
\begin{equation}
\label{eqn:X-16}
[\vec{u}]_{C^{\mu}_P(Q_{r/2})}
\le C r^{-(n/2+1+\mu)}\norm{\vec{u}}_{L^{2}(Q_{4r})}.
\end{equation}
Thus, when $n\le 3$, the theorem now
follows from a well known covering argument.

In the case when $n\ge 4$, we invoke a bootstrap argument.
For the sake of simplicity, let us momentarily assume that $4\le n \le 7$.
Let $\vec{u}$ be a weak solution of \eqref{eqn:P-01} in
$Q_{8}=Q_{8}(0)$.
Let us fix $X_0=(x_0,t_0)\in Q_2(0)$ and observe that
$\vec{u}_t$ also satisfies the system \eqref{eqn:P-01} in $Q_{4}(X_0)$.
Thus, by a similar argument that led to \eqref{eqn:X-11}, we find
that for all $x\in B_{1}(x_0)$ and $0<r\le 1$ we have
\begin{equation}
\int_{B_{r}(x)}\abs{D\vec{u}_t(\cdot,s)}^2
\le C r^{\gamma-2} \norm{\vec{u}_t}_{L^{2}(Q_{4}(X_0))}^2
\quad\text{uniformly in }s\in [t_0-4,t_0],
\end{equation}
for all $\gamma<4$ (we may take $\gamma=4$ if $n>4$).
Then, by \eqref{eqn:M-10} in Lemma~\ref{lem:M-03},
Lemma~\ref{lem:M-01}, and Lemma~\ref{lem:M-02} we conclude that
\begin{equation}
\label{eqn:X-14}
\norm{\vec{u}_t(\cdot,s)}_{M^{2,\gamma}(B_{1}(x_0))}
\le C \norm{\vec{u}}_{L^{2}(Q_{8}(0))}
\quad\text{for all }s\in [t_0-4,t_0].
\end{equation}
Since the above estimates \eqref{eqn:X-14} hold
for all $X_0=(x_0,t_0)\in Q_{2}(0)$,
we find that, in particular,
$\vec{u}_t(\cdot, s)$ belongs to $M^{2,\gamma}(B_{2}(0))$
for all $-4\le s \le 0$,
and satisfies
\begin{equation}
\label{eqn:X-15}
\norm{\vec{u}_t(\cdot,s)}_{M^{2,\gamma}(B_{2}(0))}
\le C \norm{\vec{u}}_{L^{2}(Q_{8}(0))}
\quad\text{for all }s\in [-4,0],
\end{equation}
where we also used \eqref{eqn:M-12} of Lemma~\ref{lem:M-02}.

The above estimates \eqref{eqn:X-15} for $\vec{u}_t$ now 
allows us to invoke Lemma~\ref{lem:M-03} with $\vec{f}=\vec{u}_t$ and
$\lambda=\gamma$. Then, by Lemma~\ref{lem:M-03} and Lemma~\ref{lem:M-02},
we find that for all $x\in B_{1}(0)$ and $0<r\le 1$, we have
\begin{equation*}
\int_{B_{r}(x)}\abs{D\vec{u}(\cdot,s)}^2
\le C r^{\overline{\gamma}-2} \norm{\vec{u}}_{L^{2}(Q_{8}(0))}^2
\quad\text{uniformly in }s\in [-4,0]
\end{equation*}
for all $\overline{\gamma}<\min(\gamma+4,n+2\mu_0)$.
Since we assume that $n\le 7$, we may write
$\overline{\gamma}=n+2\overline{\mu}$ for
some $\overline{\mu}>0$.
By the exactly same argument we used in the case when $n\le 3$,
we derive the estimates
\begin{equation*}
[\vec{u}]_{C^{\mu}_P(Q_{1/2})}
\le C \norm{\vec{u}}_{L^{2}(Q_{8})},
\end{equation*}
and the theorem follows as before.

Finally, if $n\ge 8$, we repeat the above process;
if $\vec{u}$ is a weak solution of \eqref{eqn:P-01} in $Q_{16}(0)$,
then $\vec{u}_t(\cdot,s)$ is
in $M^{2,\gamma}(B_{1}(0))$ for all $\gamma<8$ and so on.
The process cannot go on indefinitely and
it stops in $k=[n/4]+1$ steps. The proof is complete.
\hfil\qed

\subsection{Proof of Theorem~\ref{thm:M-03}}
The proof is based on Theorem~\ref{thm:P-01}, the proof of which, in turn,
is found in \cite{HK}.
By Theorem~\ref{thm:P-01}, we only need to establish the local boundedness
property for weak solutions of the parabolic system \eqref{eqn:P-01}
and for those of its adjoint system \eqref{eqn:P-02}.

From the hypothesis that the elliptic system \eqref{eqn:E-01}
satisfies the H\"{o}lder estimates for weak solutions at every scale,
we find, by Theorem~\ref{thm:M-02}, that the parabolic system
\eqref{eqn:P-01}
with the associated time-independent coefficients
also satisfies the H\"{o}lder estimates
for weak solutions at every scale; that is,
there exist some constants $\mu>0$ and $C$,
depending on the prescribed quantities,
such that if $\vec{u}$ is a
weak solution of \eqref{eqn:P-01} in $Q_{4r}(X)$, then it satisfies 
the estimates
\begin{equation*}
[\vec{u}]_{C^{\mu}_P(Q_{2r})}\le C r^{-(n/2+1+\mu)}
\norm{\vec{u}}_{L^2(Q_{4r})}.
\end{equation*}
Let us fix $Y\in Q_r=Q_{r}(X)$.
Then, for all $Z\in Q_{r}(Y)\subset Q_{2r}(X)$, we have
\begin{equation}
\label{eqn:final}
\abs{\vec{u}(Y)}
\le \abs{\vec{u}(Z)}+
d_P(Y,Z)^{\mu}\cdot
[\vec{u}]_{C^{\mu}_P(Q_{2r})}
\le \abs{\vec{u}(Z)}+ C r^{-(n/2+1)}
\norm{\vec{u}}_{L^2(Q_{4r})}.
\end{equation}
By averaging \eqref{eqn:final} over $Q_{r}(Y)$ with respect to $Z$,
we derive
(note $\abs{Q_r}=Cr^{n+2}$)
\begin{equation*}
\abs{\vec{u}(Y)}
\le C r^{-(n+2)} \norm{\vec{u}}_{L^1(Q_r(Y))}
+ C r^{-(n/2+1)} \norm{\vec{u}}_{L^2(Q_{4r})}.
\end{equation*}
Since $Y\in Q_r$ is arbitrary,
we find, by H\"{o}lder's inequality, that $\vec{u}$ satisfies
\begin{equation*}
\norm{\vec{u}}_{L^\infty(Q_r)}
\le C r^{-(n/2+1)} \norm{\vec{u}}_{L^2(Q_{4r})}
\end{equation*}
for some constant $C=C(n,\nu_0,M_0,\mu_0,H_0)$.

To finish the proof, we also need to show that
if $\vec{u}$ is a weak solution
of the adjoint system \eqref{eqn:P-02} in $Q^{*}_{4r}=Q^{*}_{4r}(X)$,
then it satisfies
the local boundedness property
\begin{equation}
\label{last}
\norm{\vec{u}}_{L^\infty(Q^{*}_r)}
\le C r^{-(n/2+1)} \norm{\vec{u}}_{L^2(Q^{*}_{4r})}.
\end{equation}
The verification of \eqref{last} requires
only a slight modification of the previous
arguments (mostly, one needs to replace $Q_r$ by $Q_r^{*}$ and so on),
but it is rather routine and we skip the details.
\hfil\qed

\bibliographystyle{amsplain}

\end{document}